\title{Combinatorial Mutations of Gelfand–Tsetlin Polytopes, Feigin–Fourier–Littelmann–Vinberg Polytopes, and \\ Block Diagonal Matching Field Polytopes}
\author{Oliver Clarke, Akihiro Higashitani, and Fatemeh Mohammadi }
\theoremstyle{plain}
\newtheorem{theorem}{Theorem}[section]
\newtheorem*{theoremIntro}{Theorem}
\newtheorem{proposition}[theorem]{Proposition}
\newtheorem{corollary}[theorem]{Corollary}
\newtheorem{lemma}[theorem]{Lemma}
\theoremstyle{definition}
\newtheorem{definition}[theorem]{Definition}
\newtheorem{example}[theorem]{Example}
\theoremstyle{remark}
\newtheorem{remark}[theorem]{Remark}
\newcommand{\RR}{\mathbb{R}}
\newcommand{\ZZ}{\mathbb{Z}}
\newcommand{\PP}{\mathbb{P}}
\newcommand{\QQ}{\mathbb Q}
\newcommand{\NN}{\mathbb N}
\newcommand{\CC}{\mathbb C}
\newcommand{\MB}{\mathcal{B}}
\newcommand{\MD}{\mathcal D}
\newcommand{\ML}{\mathcal L}
\newcommand{\conv}{{\rm Conv}}
\newcommand{\MO}{\mathcal O}
\newcommand{\MC}{\mathcal C}
\DeclareMathOperator{\GT}{GT}
\DeclareMathOperator{\FFLV}{FFLV}
\DeclareMathOperator{\rowspan}{rowspan}
\newcommand{\Gr}{{\rm Gr}}
\newcommand{\MP}{{\mathcal P}}
\newcommand{\init}{{\rm in}}
\begin{document}

\maketitle

\begin{abstract}
  The Gelfand-Tsetlin and the Feigin–Fourier–Littelmann–Vinberg polytopes for the Grassmannians are defined, from the perspective of representation theory, to parametrize certain bases for highest weight irreducible modules. These polytopes are Newton-Okounkov bodies for the Grassmannian and, in particular, the GT-polytope is an example of a string polytope. The polytopes admit a combinatorial description as the Stanley's order and chain polytopes of a certain poset, as shown by Ardila, Bliem and Salaza. We prove that these polytopes occur among matching field polytopes. Moreover, we show that they are related by a sequence of combinatorial mutations that passes only through matching field polytopes. As a result, we obtain a family of matching fields that give rise to toric degenerations for the Grassmannians. Moreover, all polytopes in the family are Newton-Okounkov bodies for the Grassmannians.
\end{abstract}

   {
  \hypersetup{linkcolor=black}
  \tableofcontents
}
\medskip

\section{Introduction}

We study the Gelfand-Tsetlin (GT) and Feigin-Fourier-Littelmann-Vinberg (FFLV) polytopes. The GT-polytope was introduced in \cite{Gelfand1950finite}, in the context of representation theory, to parametrise a basis for the irreducible representation $V(\lambda)$ for the Lie algebra $\mathfrak s \mathfrak l_n(\CC)$, with highest dominant integral weight $\lambda$. The FFLV-polytope was introduced in \cite{FFL11}, proving a conjecture of Vinberg \cite{vinberg2005some}, and parametrises a different basis for $V(\lambda)$. The lattice points of each polytope parametrise the respective bases for $V(\lambda)$. So it is immediate, from the perspective of representation theory, that these two polytopes have the same number of lattice points. Fourier asked whether there was a combinatorial reason for this \cite[Question~1.1]{ardilla2011GTandFFLVPolytopes} and Ardila, Bliem and Salaza gave a positive answer to this question by constructing the GT and FFLV-polytopes as the order and chain polytopes, respectively, of certain marked posets. Combinatorial techniques have proved successful in understanding GT-polytopes~\cite{FvectorGC, Alexandersson2016normalGTpolytopes}. In this paper, we study the GT and FFLV-polytopes through the lens of matching fields and combinatorial mutations that naturally arise from toric degenerations of the Grassmannian~$\Gr(k,n)$.

\smallskip
A toric degeneration of $\Gr(k,n)$ is a flat family over $\mathbb A^1_\CC$ such that the fiber over $0$ is a toric variety and all other fibers are isomorphic to $\Gr(k,n)$. There are many different approaches to constructing toric degenerations for the Grassmannian. These constructions arise from: representation theory through the use of standard monomial theory \cite{lakshmibai1998standard, kreiman2002richardson,KM05,kim2015richardson}, combinatorics via tropical geometry \cite{kaveh2019khovanskii,speyer2004tropical,KristinFatemeh,clarke2019toric}, and algebraic geometry via Newton-Okounkov bodies \cite{An13,rietsch2017newton,kaveh2019khovanskii, escobar2019wall}.

\smallskip

Two well-known families of toric degenerations are the Gelfand-Tsetlin (GT) \cite[Chapter~14]{MS05} and the Feigin-Fourier-Littelmann-Vinberg (FFLV) degenerations \cite{FFL15}. The polytopes associated to the toric fibers of these toric degenerations are the GT and FFLV-polytopes respectively. These polytopes are normal \cite{Alexandersson2016normalGTpolytopes, fang2016marked} and arise as from toric degenerations of the same variety, hence they have the same Ehrhart polynomial. This result is proved combinatorially in \cite{ardilla2011GTandFFLVPolytopes} by showing the order and chain polytopes of marked posets have the same Ehrhart polynomials. In this paper, we realise the GT-polytopes as matching field polytopes, which serve as the base case for the construction of a sequence of polytopes with the same Ehrhart polynomial, see Section~\ref{sec: intermediate matching field polytopes}.

\smallskip

Matching fields are combinatorial objects introduced by Sturmfels and Zelevinsky in the study of Newton polytopes of certain products of maximal minors \cite{sturmfels1993maximal}, see Section~\ref{sec: prelim matching fields and polytopes}. More recently, they have been shown to give rise to toric degenerations of Grassmannians \cite{KristinFatemeh,clarke2019toric,clarke2021combinatorial}, partial flag varieties \cite{OllieFatemeh2,clarke2022partialflag} and their Richardson varieties \cite{clarke2020standard,bonala2021standard,OllieFatemeh3}. Each matching field gives rise to a valuation on the Pl\"ucker algebra. We say that matching field gives rise to a toric degeneration if the valuation makes the Pl\"ucker coordinates into a Khovanskii basis \cite{kaveh2019khovanskii}. In this case, the matching field polytope is a Newton-Okounkov body for the Grassmannian, see Remark~\ref{rmk: NO-bodies}.

\smallskip

Each matching field $\Lambda$ additionally gives rise to a polytope $P_\Lambda \subseteq \RR^{k \times n}$. These polytopes are called matching field polytopes for the Grassmannian \cite{clarke2021combinatorial,clarke2022partialflag} or simply matching field polytopes. If a matching field gives rise to a toric degeneration then the projective toric variety associated to the polytope $P_\Lambda$ is the toric variety that appears as the fiber over $0$ in the corresponding toric degeneration. It turns out that properties of the matching field polytope can guarantee whether the matching field gives rise to a toric degeneration. In particular, by \cite[Theorem~1]{clarke2022partialflag}, if $P_\Lambda$ is combinatorial mutation equivalent to the GT-polytope, then $\Lambda$ gives rise to a toric degeneration.

\smallskip

A combinatorial mutation is a certain kind of piece-wise linear map, see Section~\ref{sec: prelim Mutations and Polytopes}. Mutations were originally introduced by Akhtar, Coates, Galkin, Kasprzyk in \cite{akhtar2012minkowski} to study mirror partners of $3$-dimensional Fano manifolds. Such mutations are defined as certain birational transformations of Laurent polynomials. A mutation of a Laurent polynomial induces a combinatorial mutation of its Newton polytope. In this paper, we define a combinatorial mutation in terms of its action on the dual polytope, see also \cite{clarke2021combinatorial,clarke2022partialflag}. Recently, mutations have been shown to relate: Newton-Okounkov bodies associated to adjacent prime cones of tropicalizations \cite{escobar2019wall}; families of polytopes for the flag variety \cite{higashitani2020two, fujita2020newton}; and families of matching field polytopes \cite{clarke2021combinatorial, clarke2022partialflag}. In this paper, we describe the GT and FFLV-polytopes for Grassmannians in terms of matching fields and show the following:

\begin{theoremIntro}[Theorem~\ref{thm:main_theorem}] There exist a sequence of combinatorial mutations taking the GT-polytope to the FFLV-polytope such that all intermediate polytopes are matching field polytopes.
\end{theoremIntro}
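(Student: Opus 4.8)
The plan is to produce an explicit finite sequence of block diagonal matching fields $\Lambda_0,\Lambda_1,\dots,\Lambda_N$ for $\Gr(k,n)$ with three properties: $P_{\Lambda_0}$ is unimodularly equivalent to the GT-polytope, $P_{\Lambda_N}$ is the FFLV-polytope, and for each $i$ the polytope $P_{\Lambda_i}$ is obtained from $P_{\Lambda_{i-1}}$ by a single combinatorial mutation. The first task is to pin down the two endpoints. For $\Lambda_0$ I would take the diagonal matching field: its polytope is the convex hull over $k$-subsets $\{j_1<\dots<j_k\}$ of the points $\sum_{t} e_{t,j_t}\in\RR^{k\times n}$, and comparing its facet description with the marked order polytope presentation of Ardila--Bliem--Salazar identifies $P_{\Lambda_0}$ with the GT-polytope --- this is in essence the classical fact that the diagonal matching field yields the Gelfand--Tsetlin degeneration. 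For $\Lambda_N$ I would write the block structure down explicitly, read off the facets of $P_{\Lambda_N}$ from the combinatorial description of matching field polytopes, and check that a unimodular change of coordinates on $\RR^{k\times n}$ turns them into the chain polytope inequalities of the same marked poset, i.e.\ into the FFLV-polytope.

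The heart of the proof is the intermediate family. I would order $\Lambda_0,\dots,\Lambda_N$ so that passing from $\Lambda_{i-1}$ to $\Lambda_i$ modifies the matching field by one elementary local step in its block data (equivalently, switching one unmarked element of the GT marked poset from its order behaviour to its chain behaviour), and then, for each $i$, compute $P_{\Lambda_i}$ from the matching field polytope description and verify that $\Lambda_i$ is indeed block diagonal. These intermediate polytopes interpolate between the marked order (GT) and marked chain (FFLV) polytopes; the new content of this step is that every such interpolating polytope is realised by a block diagonal matching field. For each transition I would then exhibit the combinatorial mutation data --- a primitive direction $w_i$, a lattice factor polytope $F_i$, and the induced slicing of $P_{\Lambda_{i-1}}$ by the pairing with the relevant facet normal --- and confirm that the associated piecewise-linear map carries $P_{\Lambda_{i-1}}$ onto $P_{\Lambda_i}$, by tracking the image of each vertex (dually, of each facet normal). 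Since combinatorial mutations preserve Ehrhart polynomials and numbers of lattice points, this recovers along the way the equality of the Ehrhart polynomials of the GT and FFLV polytopes.

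The main obstacle is carrying out this last verification uniformly in $k$ and $n$: choosing the directions $w_i$ and the factors $F_i$ correctly, and checking the width and compatibility conditions that make each mutation a well-defined operation taking one lattice matching field polytope exactly onto the next, with no spurious vertices created or destroyed. Tracking how the facets of $P_{\Lambda_{i-1}}$ split and recombine under the PL-map is where the technical work concentrates; everything else reduces to combinatorics of block diagonal matching fields and of marked posets. Once the sequence is in hand the stated consequences follow: each $\Lambda_i$ is combinatorial-mutation equivalent to the GT-polytope, so by \cite[Theorem~1]{clarke2022partialflag} it gives a toric degeneration of $\Gr(k,n)$, and therefore, by Remark~\ref{rmk: NO-bodies}, each $P_{\Lambda_i}$ --- the FFLV-polytope in particular --- is a Newton--Okounkov body for the Grassmannian.
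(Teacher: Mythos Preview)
Your plan has a genuine gap at the FFLV endpoint and hence in the whole intermediate family. You propose to realise every polytope in the sequence, including the FFLV-polytope, as a \emph{block diagonal} matching field polytope. That works for $\Gr(3,n)$ (this is Theorem~\ref{thm: gr3n fflv equals block diagonal}), but it fails already for $\Gr(4,8)$: a direct computation shows that $\MC(Q_{4,8})$ is not unimodular equivalent to any block diagonal matching field polytope. So the class of block diagonal matching fields is too small to contain the FFLV endpoint, and therefore too small to host the interpolating sequence you envisage. Your suggestion of ``switching one unmarked element from its order behaviour to its chain behaviour'' describes the marked chain--order polytopes of Fang--Fourier, which is a natural interpolation between $\MO(Q_{k,n})$ and $\MC(Q_{k,n})$, but you would still need to exhibit each of those as a matching field polytope, and block diagonal matching fields do not suffice for that.

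What the paper does instead is introduce a new coherent matching field $\MB_{k,n}$ (Definition~\ref{def: fflv weight vector and matching field}), whose tuples are characterised by placing every $j\in I\cap[k]$ in position $j$ and the remaining entries in increasing order, and prove that $P_{\MB_{k,n}}$ is unimodular equivalent to $\MC(Q_{k,n})$ (Theorem~\ref{thm: gr chain poset equals fflv polytope}). The intermediate matching fields $B(k,n)_i$ are then produced not by varying block data but by an explicit sequence of transpositions in the rows of the diagonal weight matrix, indexed by triples $(p_i,\ell_i,q_i)$; Proposition~\ref{prop gr fflv matching field inductive description} and Lemma~\ref{lemma gr fflv vertices} give a closed-form description of their tuples. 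The mutation at step $i$ is then written down explicitly (Definition~\ref{fflv gr tropical map}): $w^i$ is supported on four entries in rows $\ell_i,\ell_i+1$ and columns $p_i,q_i$, and the factor $F$ is the segment $\conv\{0,f^i\}$ with $f^i$ supported on rows $\ell_i,\ell_i+1$. The verification that $\varphi^i(P_{i-1})=P_i$ is done vertex-by-vertex via the inner products in Lemma~\ref{lemma gr fflv inner products}, together with the ``no bad edges'' Lemma~\ref{lemma gr fflv no bad edges}, which shows that any pair of vertices on opposite sides of $(f^i)^\perp$ can be rewritten as a sum of two vertices on the hyperplane. Your outline is right that this vertex/edge bookkeeping is where the work concentrates; but to carry it out you first need the correct family of matching fields, and that family is not the block diagonal one.
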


In particular, we show that the FFLV-polytope for the Grassmannian is a matching field polytope, see Theorem~\ref{thm: gr chain poset equals fflv polytope}, and as a result we have that all the matching fields, associated to the intermediate matching field polytopes, give rise to toric degenerations. 

\medskip

\noindent{\bf Outline.} In Section~\ref{sec: preliminaries}, we fix our notation and recall preliminary definitions and results for: combinatorial mutations in Section~\ref{sec: prelim Mutations and Polytopes}; matching fields and their polytopes in Section~\ref{sec: prelim matching fields and polytopes}; polytopes associated to posets in Section~\ref{sec: prelim two poset polytopes}; and toric degenerations on Grassmannians in Section~\ref{sec: prelim toric degen gr}, which includes a review of Gr\"obner degeneration and Khovanskii bases. The main results from the preliminaries are: Theorem~\ref{thm: Gr poset order polytope equals diagonal MF polytope}, the GT-polytope is a matching field polytope; and Theorem~\ref{thm: toric degenerations if mutation equivalent}, mutation equivalence preserves the property of a matching field giving rise to a toric degeneration. 

In Section~\ref{sec: FFLV polytopes for gr}, we study FFLV-polytopes for the Grassmannians using matching field polytopes. In Section~\ref{sec: fflv for gr3n}, we consider the case $\Gr(3,n)$ and show that the FFLV-polytopes are given by matching fields, see Example~\ref{ex: block diagonal matching field}. In Section~\ref{sec: fflv for grkn}, we define the FFLV matching field, see Definition~\ref{def: fflv weight vector and matching field}, and show that its polytope coincides with the FFLV-polytope, see Theorem~\ref{thm: gr chain poset equals fflv polytope}. In Section~\ref{sec: intermediate matching field polytopes}, we define a sequence of matching fields that are used in the proof of the main result Theorem~\ref{thm:main_theorem}, in Section~\ref{sec: fflv and gt mutation equiv}. To prove the result, we construct a sequence of mutations between the FFLV and GT-polytopes for the Grassmannian, through the matching field polytopes defined in Section~\ref{sec: intermediate matching field polytopes}.

\medskip

\noindent{\bf Acknowledgement.}~O.C. is an overseas researcher under Postdoctoral Fellowship of Japan Society for the Promotion of Science (JSPS). O.C. and F.M. were partially supported by the grants G0F5921N (Odysseus programme) and G023721N from the Research Foundation - Flanders (FWO), and the UGent 
BOF grant STA/201909/038. A.H. was partially supported by JSPS Fostering Joint International Research (B) 21KK0043
and JSPS Grant-in-Aid for Scientific Research (C) 20K03513.

\section{Preliminaries}\label{sec: preliminaries}

In this section, we review all preliminaries required for subsequent sections. We review combinatorial mutations \cite{akhtar2012minkowski} from the $M$-lattice perspective via piece-wise linear maps \cite{higashitani2020two}; matching fields, their polytopes, and the toric degenerations they induce \cite{KristinFatemeh, clarke2019toric, clarke2021combinatorial, clarke2022partialflag}; and the polytopes associated to a poset \cite{Stanley86twoposet}. In particular, we fix our notation and highlight important examples.

\subsection{Combinatorial mutations of lattice polytopes}\label{sec: prelim Mutations and Polytopes}

Throughout this section, we consider the vector space $\RR^d$ equipped with the standard inner-product $\langle \cdot, \cdot \rangle \colon \RR^d \times \RR^d \rightarrow \RR$. We will define combinatorial mutations of a lattice polytope following \cite{akhtar2012minkowski,clarke2021combinatorial,clarke2022partialflag}.

\begin{definition}\label{def: tropical map and mutation}
Fix a primitive lattice point $w \in \ZZ^d$ and a lattice polytope $F \subseteq w^\perp$. We define the \textit{tropical map} with data $w$ and $F$ as
\[\varphi_{w, F} : \RR^d \rightarrow \RR^d, \quad x \mapsto x - x_{\min} w
\]
where $x_{\min} = \min\{\langle x, f \rangle \colon f \in F \}$. Given a lattice polytope $P \subseteq \RR^d$, if $Q := \varphi_{w, F}(P)$ is lattice polytope then we say that $Q$ is a combinatorial mutation of $P$. We say that two lattice polytopes are \textit{mutation equivalent} if there exists a sequence of mutations from one to the other.
\end{definition}

The tropical map $\varphi_{w, F}$ is a piece-wise linear map. Moreover, for each region of linearity, the map is given by a unimodular map $x \mapsto x - \langle f, x \rangle w$ for some vertex $f \in F$. 

\begin{remark}\label{rmk: toric geometry M and N lattice convention}
Throughout the paper we consider lattice polytopes contained in $\RR^d$. That is a polytope whose vertices lie in the lattice $\ZZ^d \subseteq \RR^d$. From the perspective of toric geometry, this lattice is the $M$-lattice of characters of the torus, as opposed to its dual lattice $N$ of one-parameter subgroups. The tropical map in Definition~\ref{def: tropical map and mutation} is a map $\varphi_{w,F} : M_\RR \rightarrow M_\RR$ where $w \in M$ and $F \subseteq N_\RR$. So tropical maps act on the moment polytope $P$ of a toric variety whose fan is given by the inner normal fan of $P$. 
\end{remark}

\begin{example}\label{ex: mutation}
Let $w = (1,1) \in \RR^2$ and $F = \conv\{(0,0), (1,-1) \} \subseteq w^{\perp}$. Let $H = \langle w \rangle \subseteq \RR^2$ be the line spanned by $w$. The tropical map $\varphi_{w,F}$ is a piecewise shear given by
\[
\varphi_{w,F}(x,y) = \left\{\!\!
\begin{array}{ll}
    (y, 2y-x) &  \text{if } y \ge x,\\
    (x,y) & \text{otherwise.}
\end{array}
\right.
\]
Consider the plane hexagon $P \subseteq \RR^2$ given by 
\[
P = \conv\{(1,0), (0,1), (1,1), (-1, 0), (0, -1), (-1, -1) \}.
\]
The hexagon $P$ and its image under the tropical map are shown in Figure~\ref{fig: ex mutation}. From the diagram, we see that $\varphi_{w,F}$ fixes all points below the line $H$ and acts as a shear above $H$. In particular, the image is a lattice polygon so $\varphi_{w,F}$ defines a combinatorial mutation of $P$. 

\begin{figure}
    \centering
    \includegraphics{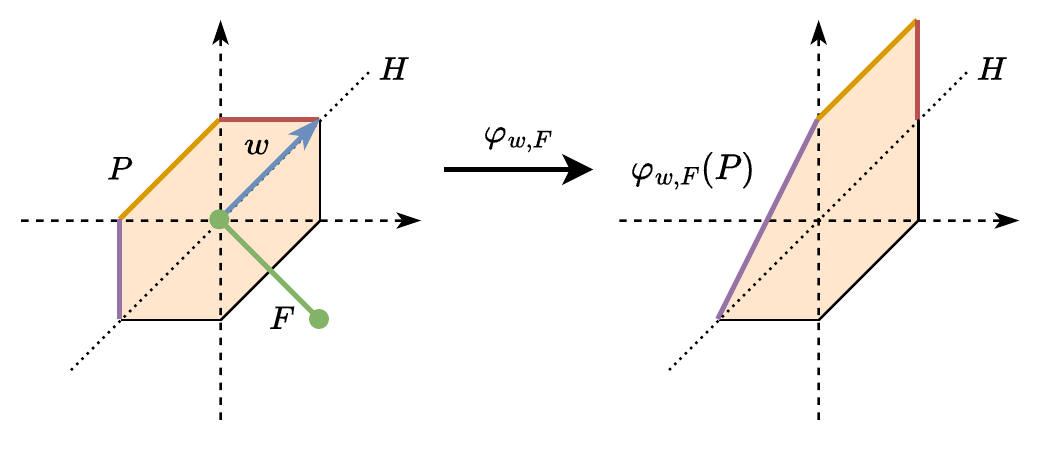}
    \caption{The combinatorial mutation in Example~\ref{ex: mutation}.}
    \label{fig: ex mutation}
\end{figure}
\end{example}

Given a lattice polytope $P \subseteq \RR^d$, its \textit{Ehrhart polynomial} $E_P(t) \in \QQ[t]$ is the function satisfying $E_P(n) = |nP \cap \ZZ^d|$ for all $n \in \ZZ_{\ge 0}$. See \cite{Ehrhart62polynomial, beck05coeffEhrhart} for more details. Combinatorial mutation preserves properties of lattice polytopes. In particular, we have the following.

\begin{proposition}[{\cite[Proposition~4]{akhtar2012minkowski}}]
If two lattice polytopes are mutation equivalent then they have the same Ehrhart polynomial.
\end{proposition}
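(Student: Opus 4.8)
The plan is to reduce to a single combinatorial mutation: if $Q = \varphi_{w,F}(P)$ with both $P$ and $Q$ lattice polytopes, I will show that for every $n \in \ZZ_{\ge 0}$ the map $\varphi_{w,F}$ restricts to a bijection $nP \cap \ZZ^d \to nQ \cap \ZZ^d$. Granting this, $E_P(n) = |nP \cap \ZZ^d| = |nQ \cap \ZZ^d| = E_Q(n)$ for all $n \ge 0$, so the polynomials $E_P$ and $E_Q$ agree at infinitely many points and hence coincide; the general statement then follows by induction on the length of a mutation sequence.

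First I would record two properties of the tropical map $\varphi = \varphi_{w,F}$, writing $\mu(x) = \min\{\langle x, f \rangle : f \in F\}$ so that $\varphi(x) = x - \mu(x)\, w$. \textbf{(a) Positive homogeneity:} for $\lambda \ge 0$ we have $\mu(\lambda x) = \lambda \mu(x)$, hence $\varphi(\lambda x) = \lambda \varphi(x)$; in particular $\varphi(nP) = n\,\varphi(P) = nQ$. \textbf{(b) Invertibility:} I claim that $\varphi_{-w,F}$ (a legitimate tropical map, as $-w$ is primitive and $(-w)^\perp = w^\perp \supseteq F$) is a two-sided inverse of $\varphi$. Indeed, put $y = \varphi(x) = x - \mu(x)\,w$; because $F \subseteq w^\perp$, for every $f \in F$ we have $\langle y, f \rangle = \langle x, f \rangle - \mu(x)\langle w, f \rangle = \langle x, f \rangle$, so $\mu(y) = \mu(x)$, and therefore $\varphi_{-w,F}(y) = y + \mu(y)\,w = x$. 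The reverse composition is verified identically, so $\varphi \colon \RR^d \to \RR^d$ is a bijection.

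Next I would promote this to the lattice. As noted after Definition~\ref{def: tropical map and mutation}, on each region of linearity $\varphi$ acts as $x \mapsto x - \langle f, x \rangle\, w$ for a vertex $f$ of $F$; since $F$ is a lattice polytope and $w$ is a lattice point, $f, w \in \ZZ^d$, this linear map has integer coefficients, and its determinant equals $1 - \langle f, w \rangle = 1$ because $f \in F \subseteq w^\perp$. Hence it is unimodular, so $\varphi(\ZZ^d) \subseteq \ZZ^d$, and the same argument applied to $-w$ gives $\varphi_{-w,F}(\ZZ^d) \subseteq \ZZ^d$. Combining with (b), $\ZZ^d = \varphi(\varphi_{-w,F}(\ZZ^d)) \subseteq \varphi(\ZZ^d) \subseteq \ZZ^d$, so $\varphi$ restricts to a bijection of $\ZZ^d$. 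Together with (a), this yields the desired bijection $nP \cap \ZZ^d \to nQ \cap \ZZ^d$ for every $n$.

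The whole argument is essentially bookkeeping, and I do not anticipate a serious obstacle; the only places where the definition of a mutation is genuinely used are the invertibility computation in (b) and the determinant computation, both of which rely on $F \subseteq w^\perp$ together with integrality of $F$ and $w$. The one point to keep in mind is that "mutation equivalent" already requires each intermediate image $\varphi_{w,F}(P)$ to be an honest lattice polytope, which is exactly what makes $E_Q$ a polynomial and legitimizes the concluding interpolation step.
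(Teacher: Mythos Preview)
Your argument is correct. The positive homogeneity of $\varphi_{w,F}$, the inverse computation via $\varphi_{-w,F}$, and the observation that $\mu(x)\in\ZZ$ for $x\in\ZZ^d$ (since the minimum is attained at a lattice vertex of $F$) together give the bijection $nP\cap\ZZ^d \to nQ\cap\ZZ^d$, and the conclusion follows. One minor remark: the determinant calculation is unnecessary for the inclusion $\varphi(\ZZ^d)\subseteq\ZZ^d$; integrality of $\mu(x)$ and of $w$ already suffice, and surjectivity on the lattice then comes from the explicit inverse $\varphi_{-w,F}$ as you note.

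As for the comparison: the paper does not supply its own proof of this proposition. It is quoted as \cite[Proposition~4]{akhtar2012minkowski} and used as a black box, so there is no argument in the paper to compare your approach against. Your proof is self-contained and more direct than the original $N$-lattice treatment in \cite{akhtar2012minkowski}, where mutations are defined via Minkowski decompositions of height slices and the Ehrhart invariance is read off from a dual description; working on the $M$-side with the tropical map, as the present paper does, makes the lattice-point bijection essentially immediate.
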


\subsection{Matching fields and their polytopes}\label{sec: prelim matching fields and polytopes}
A matching field is a combinatorial object that has been used to successfully parametrise families of toric degenerations of Grassmannians. Here, we describe matching fields and their polytopes.

\smallskip

Given integers $1 \le k < n$, a \emph{matching field} for the Grassmannian $\Gr(k,n)$ is a map $\Lambda \colon \binom{[n]}{k} \rightarrow S_k$ which sends each $k$-subset $I \subseteq [n]$ to a permutation $\Lambda(I) \in S_k$. The permutation induces an order on the elements of $I = \{i_1 < \dots < i_k \}$ given by the tuple $(i_{\Lambda(I)(1)}, i_{\Lambda(I)(2)}, \dots, i_{\Lambda(I)(k)})$, which we call a \textit{tuple of $\Lambda$}. The set of all tuples determine a matching field uniquely. Therefore, we may define a matching field by its tuples and we identify $\Lambda$ with its collection of tuples. The entries of a tuple are commonly written vertically in a tableau.

\smallskip

Fix $k < n$ and a matching field $\Lambda$ for $\Gr(k,n)$. We denote by $\{e_{i,j} \in \RR^{k \times n} \colon i \in [k], \ j \in [n]\}$ the set of standard basis vectors. The matching field $\Lambda$ gives rise to a lattice polytope given by
\[
P_\Lambda = \conv\left\{ v_{\Lambda, \tilde J} := \sum_{i = 1}^k e_{i, j_i} \colon J = (j_1, j_2, \dots, j_k) \in \Lambda \right\} \subseteq \RR^{k \times n}
\]
where $\Lambda$ is taken as a collection of tuples and $\tilde J$ is the underlying set of the tuple $J$. Often, when the matching field is clear, we will write $v_I$ for the point $v_{\Lambda, I}$, for each subset $I \subseteq [n]$. Since each point $v_I$ is a $0/1$-vector containing exactly $k$-ones, it follows that the vertices of $P_\Lambda$ are given by $V(P_\Lambda) = \left\{v_I \colon I \in \binom{[n]}{k} \right\}$.~By abuse of notation, we will identify the tuples of $\Lambda$ with the vertices of~$P_\Lambda$.

\begin{definition}\label{def: coherent matching field}
Fix $k < n$ and let $M = (m_{i,j}) \in \RR^{k \times n}$ be a weight matrix. We say that $M$ is \textit{generic} if for each $J = \{j_1 < \dots < j_k \} \in \binom{[n]}{k}$ the minimum $\widehat M(J) := \min \left\{\sum_{i = 1}^k m_{i, j_{\sigma(i)}} \colon \sigma \in S_k\right\}$ is achieved by a unique permutation $\sigma_J \in S_k$. The \textit{matching field induced by $M$} is $\Lambda_M$, which is defined by $J \mapsto \sigma_J$. A matching field $\Lambda$ for $\Gr(k,n)$ is \textit{coherent} if $\Lambda = \Lambda_M$ for some generic weight matrix $M \in \RR^{k\times n}$.
\end{definition}

\begin{example}[Diagonal matching field]\label{ex: diagonal matching field}
Fix $k < n$. The \textit{diagonal matching field} is the matching field $\MD(k,n)$ which sends each $k$-subset to the identity permutation. The tuples of $\MD(k,n)$ are $(i_1 < i_2 < \dots < i_k)$ and so the vertices of the matching field polytope $P_{\MD(k,n)}$ are given by $e_{1,i_1} + e_{2, i_2} + \dots + e_{k, i_k} \in \RR^{k \times n}$. The matching field $\MD(k, n)$ is coherent since it is induced by the generic weight matrix $M_D$ given by $(M_D)_{1,j} = 0$ for $j \in [n]$ and $(M_D)_{i,j} = (n - j)n^{i-2}$ for each $i \in \{ 2, \dots, n\}$ and $j \in [n]$. For example, for $\Gr(3,6)$ this weight matrix is given by
\[
M_D = 
\begin{bmatrix}
0 & 0 & 0 & 0 & 0 & 0 \\
5 & 4 & 3 & 2 & 1 & 0 \\
30 & 24 & 18 & 12 & 6 & 0
\end{bmatrix}.
\]
\end{example}

Note that, not all matching fields are coherent.

\begin{example}[A non-coherent matching field]\label{ex: not coherent matching field}
The matching field $\Lambda = \{12,\  23,\  31 \}$, given as a collection of tuples, is not coherent. Assume by contradiction that $\Lambda$ is induced by some weight matrix
\[
M = \begin{bmatrix}
a & b & c \\
d & e & f
\end{bmatrix}
\in \RR^{2 \times 3}.
\]
Since $12$ is a tuple, we have that $a + e < b + d$. Similarly, $23$ is a tuple so we have $b + f < c + e$. These inequalities give $a + f < c + d$. However, the tuple $31$ implies that $c + d < a + f$, a contradiction.
\end{example}

We recall the following family of matching fields which have been studied in the context of toric degenerations \cite{KristinFatemeh, clarke2021combinatorial}.

\begin{example}[Block-diagonal matching field]\label{ex: block diagonal matching field}
Fix $k < n$. The \textit{block-diagonal matching field $\Lambda$} is the matching field induced by the weight matrix 
\[
M_\Lambda = 
\begin{bmatrix}
 0 & 0 & 0 & \dots & 0 \\
 0 & n-1 & n-2 & \dots & 1 \\
 \vdots & \vdots & \vdots & \ddots & \vdots 
\end{bmatrix}
\]
where the third and subsequent rows are equal to those of $M_D$. For each subset $J = \{j_1 < \dots < j_k \}$, the corresponding tuple of $\Lambda$ is given by $(\boldsymbol{j_2}, \boldsymbol{j_1}, j_3, j_4, \dots, j_k)$ if $1 \in J$, otherwise it is given by $(\boldsymbol{j_1}, \boldsymbol{j_2},j_3,j_4, \dots, j_k)$ if $1 \notin J$.
\end{example}

\begin{remark}
The block-diagonal matching fields have be studied in more generality. In \cite{clarke2022partialflag}, the authors study matching fields induced by a weight matrix obtained by permuting the second row of $M_D$, see Example~\ref{ex: diagonal matching field}. In particular, all such matching fields polytopes are mutation equivalent to the GT-polytope, hence each matching field gives rise to a toric degeneration of the Grassmannian. For our purposes, it suffices to consider the following weaker version of this result.
\end{remark}

\begin{theorem}[{\cite[Theorems~2 and 4]{clarke2021combinatorial}}]\label{thm: block diagonal comb mutation equiv}
Let $\Lambda$ be the block diagonal matching field for $\Gr(k,n)$. The matching field polytopes $P_\Lambda$ and $P_\MD$ are mutation equivalent. Moreover, the sequence of mutation may be chosen to pass only through matching field polytopes.
\end{theorem}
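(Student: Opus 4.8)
The plan is to build an explicit finite sequence of combinatorial mutations, each one implementing one ``column'' of the change. First I would identify precisely how $P_\Lambda$ and $P_{\MD}$ differ. Writing $J = \{1 = j_1 < j_2 < \cdots < j_k\}$ for a $k$-subset containing $1$, the diagonal matching field assigns the tuple $(1, j_2, \dots, j_k)$, hence the vertex $v_J^{\MD} = e_{1,1} + e_{2,j_2} + \sum_{i=3}^k e_{i,j_i}$, whereas the block-diagonal matching field assigns $(j_2, 1, j_3, \dots, j_k)$, hence $v_J^{\Lambda} = e_{1,j_2} + e_{2,1} + \sum_{i=3}^k e_{i,j_i}$; on subsets not containing $1$ the two matching fields agree. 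So $P_\Lambda$ is obtained from $P_{\MD}$ by transposing the row-$1$ and row-$2$ entries of each vertex indexed by a subset containing $1$, and the displacement $v_J^\Lambda - v_J^{\MD} = e_{1,j_2} - e_{1,1} + e_{2,1} - e_{2,j_2}$ depends on $J$ only through its second-smallest element $j_2$, which ranges over $\{2, \dots, n-k+2\}$.

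This suggests performing the transpositions one value of $j_2$ at a time. For $m \in \{1, \dots, n-k+2\}$ let $\Lambda^{(m)}$ be the matching field that agrees with $\Lambda$ on every $J$ with $1 \in J$ whose second-smallest element is at most $m$, and agrees with $\MD$ on all other subsets, so that $\Lambda^{(1)} = \MD$ and $\Lambda^{(n-k+2)} = \Lambda$, and each $\Lambda^{(m)}$ is a genuine matching field. (In fact $\Lambda^{(m)}$ is coherent: realize it by a weight matrix with zero first row, with $M_{2,1}$ lying strictly between $M_{2,m}$ and $M_{2,m+1}$, and with lower rows equal to those of $M_D$; coherence is not needed for the statement.) It then suffices to show, for each $m = 2, \dots, n-k+2$, that $P_{\Lambda^{(m)}}$ is a single combinatorial mutation of $P_{\Lambda^{(m-1)}}$.

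For a fixed such $m$, I would take $w_m = e_{1,m} - e_{1,1} + e_{2,1} - e_{2,m}$, which is primitive, together with $F_m = \conv\{0, f_m\}$ where $f_m = -e_{1,1} - e_{1,m} + \sum_{j=m+1}^n e_{2,j}$, identifying $\RR^{k\times n}$ with its dual via the standard inner product; a one-line check gives $\langle f_m, w_m\rangle = 0$, so $F_m \subseteq w_m^\perp$ is a lattice segment, and $\varphi_{w_m, F_m}$ is the identity on $\{\langle x, f_m\rangle \ge 0\}$ and the unimodular shear $x \mapsto x - \langle x, f_m\rangle\, w_m$ on $\{\langle x, f_m\rangle \le 0\}$. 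The combinatorial core is then a short case analysis: evaluating $f_m$ on the vertices $v$ of $P_{\Lambda^{(m-1)}}$ — splitting into the cases $1 \notin J$ with $\min J = m$ or $\ne m$, and $1 \in J$ with second-smallest element $<m$, $=m$, or $>m$ — shows $\langle v, f_m\rangle = -1$ exactly for the vertices $v_J^{\MD}$ with $1 \in J$ and second-smallest element $m$, and $\langle v, f_m\rangle \ge 0$ for all other vertices. Hence $\varphi_{w_m, F_m}$ fixes every vertex of $P_{\Lambda^{(m-1)}}$ except those $v_J^{\MD}$, each of which it moves by exactly $w_m = v_J^\Lambda - v_J^{\MD}$, so it carries the vertex set of $P_{\Lambda^{(m-1)}}$ bijectively onto that of $P_{\Lambda^{(m)}}$.

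The step I expect to be the real obstacle is upgrading this vertex-level bijection to the equality of polytopes $\varphi_{w_m, F_m}(P_{\Lambda^{(m-1)}}) = P_{\Lambda^{(m)}}$: a tropical map is only piecewise linear, so one must rule out that it folds $P_{\Lambda^{(m-1)}}$ and produces a non-convex image, or a convex image that is not the convex hull of the images of the vertices. I would handle this with the Minkowski-decomposition criterion for combinatorial mutations from \cite{akhtar2012minkowski}: since $\varphi_{w_m, F_m}$ is unimodular on each of the two half-spaces determined by $\{\langle x, f_m\rangle = 0\}$ and the two pieces agree on that hyperplane, it is enough to check that the slices of $P_{\Lambda^{(m-1)}}$ transverse to $w_m$ decompose appropriately relative to $F_m$; equivalently, that the images of $P_{\Lambda^{(m-1)}} \cap \{\langle x, f_m\rangle \ge 0\}$ and $P_{\Lambda^{(m-1)}} \cap \{\langle x, f_m\rangle \le 0\}$ glue along their common facet into the convex lattice polytope $P_{\Lambda^{(m)}}$. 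This can be verified directly from the facet/inequality description of matching field polytopes, which are $0/1$-polytopes and hence rigid, or by transporting a regular unimodular triangulation of $P_{\Lambda^{(m-1)}}$ across $\varphi_{w_m, F_m}$; this is where the bulk of the technical work sits. Composing $\varphi_{w_2, F_2}, \dots, \varphi_{w_{n-k+2}, F_{n-k+2}}$ then gives a sequence of mutations from $P_{\MD}$ to $P_\Lambda$ all of whose intermediate polytopes are the matching field polytopes $P_{\Lambda^{(m)}}$, which is the claim.
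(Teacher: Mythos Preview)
The paper does not prove this statement itself; it is quoted from \cite{clarke2021combinatorial}. The analogous construction that the paper \emph{does} carry out (Sections~\ref{sec: intermediate matching field polytopes}--\ref{sec: fflv and gt mutation equiv}, for the FFLV matching field) follows the same template you propose: a chain of intermediate coherent matching fields and explicit tropical maps $\varphi_{w,F}$ with $F$ a segment. So your strategy is the right one, and your $w_m$ and your intermediates $\Lambda^{(m)}$ are fine.

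The gap is your choice of $f_m$. Take $\Gr(2,4)$ and $m=2$, so $f_2 = -e_{1,1}-e_{1,2}+e_{2,3}+e_{2,4}$. Then $\langle f_2, v_{\{1,2\}}\rangle = -1$ while $\langle f_2, v_{\{3,4\}}\rangle = +1$. The midpoint $x=\tfrac12(v_{\{1,2\}}+v_{\{3,4\}})$ lies on $f_2^\perp$, hence is fixed by $\varphi_{w_2,F_2}$; but $x$ has $e_{2,2}$-coordinate $\tfrac12$, and no vertex of $P_{\Lambda^{(2)}}$ has a nonzero $e_{2,2}$-entry (the tuple $(1,2)$ has become $(2,1)$). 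Thus $x\in\varphi_{w_2,F_2}(P_\MD)\setminus P_{\Lambda^{(2)}}$ and your map does not take $P_\MD$ to $P_{\Lambda^{(2)}}$. The phenomenon is general: whenever $1\notin I$ and $\min I>m$ you get $\langle f_m,v_I\rangle=+1$, and for such $I$ paired with $J=\{1,m,j_3,\dots\}$ the row-$1$ multiset to redistribute is $\{1,i_1\}$ with $i_1>m$, so no rewriting $v_I+v_J=v_{I'}+v_{J'}$ with both summands in $f_m^\perp$ exists. Your anticipated ``real obstacle'' is therefore not merely technical --- with this $f_m$ it is genuinely false.

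The fix, parallel to Definition~\ref{fflv gr tropical map}, is to put $-1$ in row $1$ at \emph{every} column $\ge m$, not just at column $m$: take $f_m=-e_{1,1}-\sum_{j\ge m}e_{1,j}+\sum_{j>m}e_{2,j}$. Then any $I$ with $1\notin I$ and $\min I>m$ now pairs to $0$, the remaining $(+1)$-vertices all have $2\le i_1\le m-1$, and for those one checks $v_I+v_J=v_{I'}+v_{J'}$ with $I'=\{i_1,m,j_3,\dots,j_k\}$, $J'=\{1,i_2,i_3,\dots,i_k\}$, both in $f_m^\perp$. This ``no bad edges'' statement (the analogue of Lemma~\ref{lemma gr fflv no bad edges}) is what actually upgrades the vertex bijection to an equality of polytopes: it lets you rewrite any convex combination so as to be supported on a single closed half-space of $f_m^\perp$, where $\varphi$ is linear. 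That argument is both cleaner and more robust than the Minkowski-slice or triangulation-transport approaches you sketch.
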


In this paper, we reinterpret this result in terms of order and chain polytopes associated to the Grassmannian poset for Grassmannian $\Gr(3,n)$, see Section~\ref{sec: fflv for gr3n}.

\subsection{Order and chain polytopes}\label{sec: prelim two poset polytopes}

In this section, we recall the definitions of two polytopes associated to a poset by Stanley \cite{Stanley86twoposet}, and recall the description of their vertices in terms of basic properties of posets.

\begin{definition} \label{def: order and chain polytope}
Fix a finite poset $\Pi = \{p_1, \dots, p_d\}$. We define the two polytopes
\begin{align*}
    \MO(\Pi) &= \{ x \in \RR^d \colon x_i \le x_j \text{ if } p_i < p_j \text{ and } 0 \le x_i \le 1 \text{ for all } i, j \in [d] \}, \\
    \MC(\Pi) &= \{ x \in \RR^d \colon x_{i_1} + \dots + x_{i_s} \le 1 \text{ if } p_{i_1} < \dots < p_{i_s} \text{ and } 0 \le x_i \text{ for all } i \in [d] \}
\end{align*}
called the \textit{order polytope} and \textit{chain polytope}, respectively.
\end{definition}

The vertices of the order and chain polytopes can be described in terms of the properties of the underlying poset. Let $(\Pi, <)$ be a poset.
A subset $A \subseteq \Pi$ is an \textit{anti-chain} if any pair of elements of $A$ are incomparable. A subset $F \subseteq \Pi$ is called a \textit{filter} if for any $p \in F$ and $q \in \Pi$ such that $p < q$, we have that $q \in F$. We note that the empty set is both an anti-chain and a filter.

\begin{proposition}[{\cite[Corollary~1.3 and Theorem~2.2]{Stanley86twoposet}}]\label{prop: vertices of order and chain polytopes}
Let $\Pi = \{p_1, \dots, p_d \}$ be a finite poset. The vertices of $\MO(\Pi)$ are in bijection with the filters of $\Pi$. Each filter $F \subseteq \Pi$ corresponds to the vertex $\chi_F = (x_1, \dots, x_d)$ the characteristic vector of $F$ where $x_i = 1$ if $p_i \in F$ and $x_i = 0$ if $p_i \notin F$. The vertices of the chain polytope $\MC(\Pi)$ are the characteristic vectors of the anti-chains of $\Pi$.
\end{proposition}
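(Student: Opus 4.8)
The plan is to prove both halves by the same two-step template. First, exhibit the claimed lattice points inside the polytope and note that they are automatically vertices because the polytope sits inside the cube $[0,1]^d$: every $0/1$-point $p$ of a subpolytope $P\subseteq[0,1]^d$ is a vertex of $P$, since the functional $x\mapsto\sum_{i\colon p_i=1}x_i-\sum_{i\colon p_i=0}x_i$ is uniquely maximised at $p$ over $[0,1]^d$, hence over $P$. Second, show conversely that every vertex is a $0/1$-point and read off the poset data.

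For $\MO(\Pi)$: if $F$ is a filter and $p_i<p_j$, then $p_i\in F$ forces $p_j\in F$, so $(\chi_F)_i\le(\chi_F)_j$; thus $\chi_F\in\MO(\Pi)\subseteq[0,1]^d$ and $\chi_F$ is a vertex. Conversely, I would show a vertex $x$ of $\MO(\Pi)$ has all coordinates in $\{0,1\}$ by a level-set perturbation: if some coordinate value $u$ satisfies $0<u<1$, perturb $x$ to $x^{\pm}$ by adding $\pm\delta$ to exactly the coordinates equal to $u$, with $\delta$ smaller than the gap from $u$ to the nearest other coordinate value and to $0$ and to $1$. Then $x^{\pm}\in\MO(\Pi)$: the box constraints hold for small $\delta$, a comparability $p_i<p_j$ with exactly one of $x_i,x_j$ equal to $u$ has a strict gap of at least $\delta$ so it survives, and when both or neither equals $u$ the inequality is unchanged. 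Since $x=\tfrac{1}{2}(x^{+}+x^{-})$ with $x^{+}\ne x^{-}$, this contradicts $x$ being a vertex. Finally, a $0/1$-point $x\in\MO(\Pi)$ equals $\chi_F$ for $F=\{p_i\colon x_i=1\}$, and the inequalities $x_i\le x_j$ for $p_i<p_j$ say precisely that $F$ is a filter; as $F\mapsto\chi_F$ is injective, the bijection follows.

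For $\MC(\Pi)$: if $A$ is an anti-chain and $p_{i_1}<\cdots<p_{i_s}$ is a chain, at most one $p_{i_\ell}$ lies in $A$, so $(\chi_A)_{i_1}+\cdots+(\chi_A)_{i_s}\le1$; with the singleton-chain inequalities $x_i\le1$ this gives $\MC(\Pi)\subseteq[0,1]^d$, so each $\chi_A$ is a vertex, and conversely a $0/1$-point is $\chi_A$ with $A=\{p_i\colon x_i=1\}$, an anti-chain since a comparability $p_i<p_j$ inside $A$ would violate $x_i+x_j\le1$. It remains to show every vertex of $\MC(\Pi)$ is a $0/1$-point, equivalently $\MC(\Pi)=\conv\{\chi_A\colon A\ \text{an anti-chain}\}$. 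I would deduce this from Stanley's transfer map $\phi\colon\RR^d\to\RR^d$, $\phi(x)_i=x_i-\max(\{0\}\cup\{x_j\colon p_j\lessdot p_i\})$ (maximum over elements covered by $p_i$), and its partner $\theta(x)_i=\max\{x_{i_1}+\cdots+x_{i_s}\colon p_{i_1}<\cdots<p_{i_s}=p_i\}$. Direct checks show $\theta$ maps $\MC(\Pi)$ into $\MO(\Pi)$, $\phi$ maps $\MO(\Pi)$ into $\MC(\Pi)$, and $\phi\circ\theta=\mathrm{id}$, so $\phi$ is onto $\MC(\Pi)$; a second computation gives $\phi(\chi_F)=\chi_{\min F}$, the anti-chain of minimal elements of $F$. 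Since $\phi$ is affine on each maximal cell of the canonical triangulation of $\MO(\Pi)$ — simplices cut out by a total order on coordinates, with vertices among the $\chi_F$ — it maps each cell onto the convex hull of finitely many $\chi_A$; summing over cells yields $\MC(\Pi)=\phi(\MO(\Pi))\subseteq\conv\{\chi_A\}$, and the reverse inclusion is the easy part above. Thus the vertices of $\MC(\Pi)$ are exactly the $\chi_A$, and $F\mapsto\min F$ is a bijection from filters to anti-chains.

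The main obstacle is the chain-polytope half, specifically verifying the transfer-map facts: that $\theta$ and $\phi$ genuinely land in the respective polytopes, that $\phi\circ\theta=\mathrm{id}$, and that $\phi$ is affine on the cells of the canonical triangulation so that "$\phi$ of a convex hull is the hull of the images" can be applied cell by cell. Alternatively, one can bypass the transfer map and argue directly that a non-$0/1$ point $x\in\MC(\Pi)$ admits a nonzero perturbation $y$ with $y_i=0$ whenever $x_i=0$ and $\sum_{p_i\in C}y_i=0$ for every chain $C$ with $\sum_{p_i\in C}x_i=1$; this is a linear-algebra statement about the sub-poset supported on the positive coordinates (where tight chains are inclusion-maximal), but it is noticeably fiddlier to organise than the transfer-map route.
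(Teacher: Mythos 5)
The paper does not prove this proposition; it is imported verbatim from Stanley's \emph{Two Poset Polytopes} (cited as Corollary~1.3 and Theorem~2.2 there), so there is no in-paper argument to compare against. Your proof is correct and, in substance, reproduces Stanley's own route. A few remarks on the two halves.

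For $\MO(\Pi)$ your perturbation argument is clean and complete: a non-$0/1$ vertex would have a coordinate value $u\in(0,1)$, and shifting the level set $\{i\colon x_i=u\}$ by $\pm\delta$ stays in $\MO(\Pi)$ because comparabilities with exactly one endpoint at level $u$ are strict by at least the chosen gap, while comparabilities with both or neither endpoint at level $u$ are unchanged; this exhibits $x$ as a midpoint. Stanley's original proof instead reads the vertex set off the canonical triangulation of $\MO(\Pi)$ into unimodular simplices indexed by linear extensions; your argument is slightly more elementary and self-contained, at the cost of not producing the triangulation you later invoke for the chain-polytope half.

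For $\MC(\Pi)$ you correctly identify that the nontrivial direction is $\MC(\Pi)\subseteq\conv\{\chi_A\}$, and the transfer-map route is exactly Stanley's. The facts you list are all true and verifiable by short computations: $\phi$ maps $\MO(\Pi)$ into $\MC(\Pi)$ because for any chain $p_{i_1}<\cdots<p_{i_s}$ one has $\phi(x)_{i_t}\le x_{i_t}-x_{i_{t-1}}$ for $t\ge2$ (since $x_{i_{t-1}}\le\max\{x_j\colon p_j\lessdot p_{i_t}\}$ in the order polytope) and hence the chain sum telescopes to $\le x_{i_s}\le1$; $\theta$ maps $\MC(\Pi)$ into $\MO(\Pi)$ because extending a chain ending at $p_i$ by $p_j>p_i$ only increases the maximum; and $\phi\circ\theta=\mathrm{id}$ on $\RR_{\ge0}^d$ because $\theta(x)_i=x_i+\max\{\theta(x)_j\colon p_j\lessdot p_i\}$. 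The affineness of $\phi$ on the cells of the order-complex triangulation, and the identification $\phi(\chi_F)=\chi_{\min F}$, then yield the conclusion as you say. One small point you should not skip over: the simplices of the canonical triangulation have \emph{all} vertices among the $\chi_F$, not just some, which is what licenses the conclusion that $\phi$ of each cell is a convex hull of points of the form $\chi_A$; this is exactly where Stanley's triangulation (which you did not construct in the first half) is needed, so if you wanted a fully self-contained write-up you would either construct that triangulation or run the direct perturbation argument you sketch in the last paragraph. As a blind reconstruction of a cited classical result, the proposal is sound.
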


In particular, $\MO(\Pi)$ and $\MC(\Pi)$ are $0/1$-polytopes. Observe that for any filter $F \subseteq \Pi$, the set of elements $A(F) = \min F := \{x \in F \colon y \nless x \text{ for all } y \in F\}$ is an anti-chain. It is straightforward to show that the map $F \mapsto A(F)$ is a bijection between filters and anti-chains. This means that $\MO(\Pi)$ and $\MC(\Pi)$ have the same number of vertices. Moreover, the polytopes $\MO(\Pi)$ and $\MC(\Pi)$ have the same Ehrhart polynomial \cite{higashitani2020two}.

In this paper, we consider the polytopes associated to the Grassmannian poset, which we describe in the following important definition.

\begin{definition}[Grassmannian poset]\label{ex: grassmannian poset}
Fix $k < n$. Let $Q_{k,n} = \{(i,j) \in \ZZ^2 : 1 \le i \le k,\ 1 \le j \le n-k\}$ be a set of pairs of integers. We define a partial order on $Q_{k,n}$ given by
\[
(a,b) < (c,d) \iff a \le c\text{ and }\ b \le d.
\]
\end{definition}

\begin{example}\label{ex: Q37 poset}
The Hasse diagram for the poset $Q_{3,7}$ is shown in Figure~\ref{fig:gr37_poset}. The vertices of the order polytope $\MO(Q_{3,7})$ are in bijection with the filters of $Q_{3,7}$. For example, the filter $F = \{(3,2), (3,3), (3,4), (2,4), (1,4) \}$, the shaded elements in the figure, gives rise to the vertex $\chi_F$ which is the characteristic vector of $F$. Each filter can be identified by its corresponding anti-chain $\min F$. For this example, $\min F = \{ (3,2), (1,4)\}$, which are the square elements in the figure.

\begin{figure}
    \centering
    \includegraphics{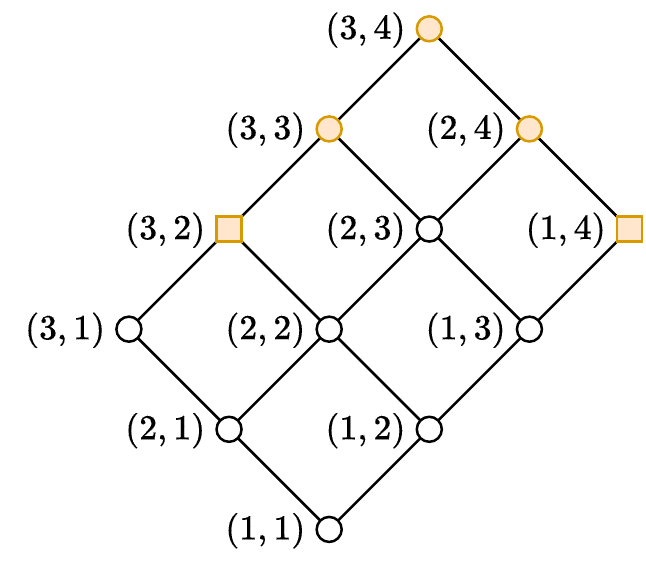}
    \caption{The Hasse diagram $Q_{3,7}$ in Example~\ref{ex: Q37 poset}. The shaded elements form a filter. The squares are the anti-chain of minimal elements of the filter.}
    \label{fig:gr37_poset}
\end{figure}
\end{example}

The order polytope $\MO(Q_{k,n})$ of the Grassmannian poset may be naturally identified with the matching field polytope $P_\MD$ for the diagonal matching field. For each filter $F \subseteq Q_{k,n}$ and $i \in [k]$ define $s_F(i) := |\{(i, j) \in F \colon j \in [n-k] \}| \in \{0, 1, \dots n-k \}$ and $J(F) := \{i + s_F(i) \colon i \in [k] \} \subseteq [n]$.

\begin{theorem}\label{thm: Gr poset order polytope equals diagonal MF polytope}
Fix $k < n$. The order polytope of the Grassmannian poset $\MO(Q_{k,n})$ is unimodular equivalent to the matching field polytope $P_\MD$ for the diagonal matching field. Moreover, the unimodular map between them may be chosen so that the vertex $\chi_F \in \MO(Q_{k,n})$, for some filter $F$, is mapped to the vertex $v_{\MD, J(F)} \in P_\MD$.
\end{theorem}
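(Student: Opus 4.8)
The plan is to write down an explicit affine-linear map $\Phi \colon \RR^{Q_{k,n}} \to \RR^{k \times n}$ and verify directly that it restricts to a unimodular bijection $\MO(Q_{k,n}) \to P_\MD$ sending $\chi_F$ to $v_{\MD, J(F)}$. Recall that a filter $F$ of $Q_{k,n}$ is, in each row $i \in [k]$, an "upper segment" $\{(i,j) \colon j > n-k - s_F(i)\}$ of length $s_F(i)$, and the filter condition forces $s_F(1) \le s_F(2) \le \dots \le s_F(k)$; consequently $J(F) = \{1 + s_F(1) < 2 + s_F(2) < \dots < k + s_F(k)\}$ is a genuine $k$-subset of $[n]$, and the assignment $F \mapsto (s_F(1), \dots, s_F(k))$ is a bijection from filters of $Q_{k,n}$ to weakly increasing tuples in $\{0,1,\dots,n-k\}^k$, hence to $k$-subsets of $[n]$ via $(s_1,\dots,s_k) \mapsto \{i+s_i\}$. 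This is the combinatorial backbone and it is routine.

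First I would set up coordinates on $\RR^{Q_{k,n}}$ with entries $x_{i,j}$ for $(i,j) \in Q_{k,n}$, ordered so that the chains are exactly the strictly increasing staircase paths, and define $\Phi$ by a formula that, on a $0/1$ filter vector, reads off the row sums. Concretely, for a point $x \in \MO(Q_{k,n})$ put $s_i(x) := \sum_{j=1}^{n-k} x_{i,j}$ (this is exactly $s_F(i)$ when $x = \chi_F$), and send $x$ to the matrix whose $i$-th row is the vector $e_{i+s_i} - e_{i}$ expressed in a suitable telescoping way — more precisely I would define $\Phi(x)_{i,\ell} = $ (the increment of the cumulative sum $\sum_{j \le \ell - i} x_{i,j}$), arranged so that for $x = \chi_F$ the $i$-th row becomes $e_{i, i + s_F(i)}$, i.e. the vector $v_{\MD, J(F)}$. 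The key checks are then: (i) $\Phi$ is affine-linear with integer matrix and its inverse (on the relevant affine subspace, or after extending by a complementary integer map) is also integral, so $\Phi$ is unimodular; (ii) $\Phi$ maps the defining inequalities of $\MO(Q_{k,n})$ to those cutting out $P_\MD$, equivalently $\Phi$ maps vertices to vertices and is a bijection on vertex sets — which follows from the bijection "filters $\leftrightarrow$ $k$-subsets" above together with Proposition~\ref{prop: vertices of order and chain polytopes}; and (iii) since both polytopes are the convex hulls of their vertices and $\Phi$ is affine, (ii) upgrades to $\Phi(\MO(Q_{k,n})) = P_\MD$.

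The cleanest route for the bulk of the work is to avoid chasing the facet inequalities altogether: show $\Phi$ is an injective affine map (so it takes $\dim$-many vertices to affinely independent points), show it carries the vertex set of $\MO(Q_{k,n})$ onto the vertex set of $P_\MD$ using the filter/subset dictionary, and conclude $\Phi(\MO(Q_{k,n})) = \conv \Phi(V(\MO(Q_{k,n}))) = \conv V(P_\MD) = P_\MD$; unimodularity is a separate, purely linear-algebra verification that the integer matrix of $\Phi$ (suitably squared up to a $kn \times kn$ matrix) has determinant $\pm 1$, which one sees from its triangular/block structure coming from the telescoping definition. The main obstacle is purely bookkeeping: pinning down the indexing so that the telescoping sum "$\sum_{j} x_{i,j}$ over the initial segment determining column $\ell$" lands the support of row $i$ of $\Phi(\chi_F)$ exactly on column $i + s_F(i)$, and simultaneously keeping the map integral and unimodular — there is no conceptual difficulty, only the risk of an off-by-one error in translating the staircase shape of $Q_{k,n}$ into the $k \times n$ matrix layout.
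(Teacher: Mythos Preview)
Your proposal is correct and is essentially the same as the paper's proof: the paper writes down the explicit affine map $\widehat\phi(x)=\phi(x)+v_{\MD,\{1,\dots,k\}}$ with $\phi(x_{i,j})=y_{i,(n-k-j)+i+1}-y_{i,(n-k-j)+i}$, observes the telescoping on $\chi_F$ to get $\widehat\phi(\chi_F)=v_{\MD,J(F)}$, checks the filter/$k$-subset bijection on vertices, and then argues unimodularity via the induced bijection of lattices (rather than by squaring up to a $kn\times kn$ determinant, but the content is the same). The only thing you have left implicit is the precise index shift, which is exactly the ``off-by-one'' bookkeeping you flag.
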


\begin{proof}

We write $x_{i,j} \in \RR^{k \times (n-k)}$ and $y_{i,j} \in \RR^{k \times n}$ for the standard basis vectors. We define the linear map
\[
\phi :  \RR^{k \times (n-k)} \rightarrow \RR^{k \times n}, \quad x_{i,j} \mapsto y_{i, (n-k-j)+i+1 } - y_{i, (n-k-j)+i}
\]
and the affine linear map $\widehat \phi(x) = \phi(x) + v_{\MD, \{1,2,\dots, k \}}$. We proceed to prove that $\widehat \phi(\MO(Q_{k,n})) = P_\MD$. Since $\widehat \phi$ is an affine linear map, it preserves convexity. Hence, it suffices to show that $\widehat \phi$ restricts to a bijection between the vertices of $\MO(Q_{k,n})$ and $P_\MD$. Fix a vertex $\chi_F \in \MO(Q_{k,n})$, where $F \subseteq Q_{k,n}$ is a filter. Since $F$ is a filter, if $(i,j) \in F$ then it follows that $(i, j+1), (i, j+2), \dots, (i, n-k) \in F$. Therefore
\[
\phi(\chi_F) = 
\sum_{(i,j) \in F} \phi(x_{i,j}) = 
\sum_{(i,j) \in F} y_{i, (n-k+i)-j+1} - y_{i, (n-k+i)-j} = \sum_{\substack{(i,j) \in F \\ j \text{ minimal}}} y_{i, (n-k+i)-j+1} - y_{i,i}.
\]
So $\widehat \phi(\chi_F) = \sum y_{i, (n-k+i)-j+1}$ where the sum is taken over $(i,j) \in \widehat{J(F)}$ where
\[
\widehat{J(F)} =  
\{(i,j) \in F \colon i \in [k], \ j \text{ is minimal}\} \cup 
\{(i, n-k+1) \colon i \in [k], \ (i,j) \notin F \text{ for any } j \in [n-k]\}.
\]
For each $i \in [k]$ we have that $\widehat{J(F)}$ contains exactly one element $(i,j)$ for some $j \in [n-k+1]$. By definition, it immediately follows that $s_F(i) = (n-k+1) - j$. Therefore $\widehat \phi(\chi_F) = \sum_{i \in [k]} y_{i, i+s_F(i)}$. Since $F$ is a filter, if $(i,j) \in F$ then we have $(i+1, j) \in F$. It follows that $s_F(1) \le s_F(2) \le \dots \le s_F(k)$ and so $\widehat \phi(\chi_F) = v_{\MD, J(F)}$. And so $\widehat \phi$ is a well-defined map between the vertices of $\MO(Q_{k,n})$ and $P_\MD$ that sends a vertex $\chi_F \in \MO(Q_{k,n})$ to $v_{\MD, J(F)} \in P_\MD$. The restriction of $\widehat \phi$ to the vertices of the polytopes has an inverse. Explicitly, for each subset $J = \{j_1 < \dots < j_k\} \subseteq [n]$, the inverse sends the vertex $v_{\MD, J} \in P_\MD$ to the vertex $\chi_{F(J)} \in \MO(Q_{k,n})$ where the filter $F(J) \subseteq Q_{k,n}$ is defined by its minimal elements
\[
\min(F(J)) := \{(i, (n-k+1)-(j_i-i))  \in Q_{k,n} \colon i \in [k], \ j_i-i > 0  \}.
\]
Therefore, the map $\widehat \phi$ defines a bijection on between the vertices of the polytopes $\MO(Q_{k,n})$ and $P_\MD$ that sends a vertex $\chi_F \in \MO(Q_{k,n})$ to $v_{\MD, J(F)} \in P_\MD$.

\medskip

We remark that $\phi$ is a unimodular map. By unimodular, we mean that $\phi$ restricts to a bijection between the distinguished lattices of $\RR^{k \times (n-k)}$ and $\RR^{k \times n}$. These lattices are defined to be the $\ZZ$-affine span of the vertices of the polytopes $\MO(Q_{k,n})$ and the translate $\widetilde{P_\MD}$ of the polytope $P_\MD$ by the vector $-v_{D,\{1, \dots, k\}}$. We observe that the $\ZZ$-affine span of the vertices of $\MO(Q_{k,n})$ is equal to the natural lattice $\ZZ^{k \times (n-k)} \subseteq \RR^{k \times (n-k)}$. Since $\phi$ defines a linear isomorphism between $\MO(Q_{k,n})$ and $\widetilde{P_\MD}$, it follows immediately that $\phi$ restricts to a bijection between their respective lattices. Explicitly, the distinguished lattice inside $\RR^{k \times n}$ is given by $\ZZ[y_{i,j} - y_{i,i}]_{i \in [k], j \in \{i+1, i+2, \dots, n-k+i\}}$. Hence, the polytopes $\MO(Q_{k,n})$ and $P_\MD$ are unimodular equivalent.
\end{proof}

We call the polytope $\MO(Q_{k,n})$ the GT-polytope (Gelfand-Tsetlin polytope) for the Grassmannian and identify it with the diagonal matching field polytope $P_\MD$.

\subsection{Toric degenerations of Grassmannians}\label{sec: prelim toric degen gr}

In this section, we show how coherent matching fields give rise to toric degenerations of the Grassmannians. Throughout, we fix $k < n$, a generic weight matrix $M \in \RR^{k \times n}$ and the coherent matching field $\Lambda = \Lambda_M$ induced by $M$, see Definition~\ref{def: coherent matching field}.

\medskip

\noindent \textbf{Grassmannians.} The Grassmannian $\Gr(k,n)$ is the space of $k$-dimensional linear subspaces of $\CC^n$. The Pl\"ucker embedding realises $\Gr(k,n)$ as a projective variety given by the image of the map
\[
\phi \colon \Gr(k,n) \rightarrow \PP^{\binom{n}{k} - 1}, \quad \rowspan
\begin{pmatrix}
x_{1,1} & \dots & x_{1,n} \\
\vdots & \ddots & \vdots \\
x_{k,1} & \dots & x_{k,n}
\end{pmatrix} \mapsto
(\det(X_I))_{I \in \binom{[n]}{k}}
\]
where $X = (x_{i,j})$ is a $k \times n$ matrix and $X_I$ is the $k \times k$ submatrix consisting of the columns indexed by $I\subseteq [n] := \{1,2, \dots, n\}$. We identify $\Gr(k,n)$ with its image under $\phi$. So $\Gr(k,n)$ is a subvariety of $\PP^{\binom n k -1}$ given by the vanishing locus of the ideal $G_{k,n} \subseteq \CC[P_I]_{I \in \binom{[n]}k}$. Explicitly, $G_{k,n}$ is the kernel of the polynomial map
\[
\phi^* \colon \CC[P_I]_{I \in \binom{[n]}k} \rightarrow \CC[x_{i,j}]_{i \in [k], j \in [n]}, \quad P_I \mapsto \det(X_I)
\]
where $X = (x_{i,j})$ is a $k \times n$ matrix of variables and $X_I$ is the submatrix of $X$ as above. We call $G_{k,n}$ the \textit{Pl\"ucker ideal} of the Grassmannian and each maximal minor $\det(X_I)$ a \textit{Pl\"ucker form}. The ring generated by the Pl\"ucker forms is called the \textit{Pl\"ucker algebra}.

\medskip
\noindent \textbf{Gr\"obner degenerations.} The theory of Gr\"obner fans, introduced by Mora and Robbiano \cite{mora1988grobner}, is one of the main tools in commutative algebra to degenerate polynomial ideals into toric ideals.
Let $w = (w_1, \dots, w_n)\in \RR^n$ be a weight vector for the polynomial ring $R = \CC[x_1, \dots, x_n]$. For each polynomial $f = \sum_{u \in \NN^n} c_u x^u \in R$, its weight with respect to $w$ is given by $w(f) = \min\{ u \cdot w \colon c_u \neq 0\}$ and its initial form with respect to $w$ is
\[
\init_w(f) = \sum_{\substack{u \in \NN^n \\ u \cdot w = w(f)}} c_u x^u.
\]
For each ideal $I \subseteq R$ associated to a variety $X \subseteq \CC^n$ we define its initial ideal with respect to $w$ as $\init_w(I) = \langle \init_w(f) \colon f \in I\rangle$. For each weight $w$, we obtain a flat family over $\mathbb A^1_{\CC}$ whose fibers are given by the ideals
\[
I_t = 
\left\langle 
t^{-w(f)} f(t^{w_1} x_1, t^{w_2} x_2, \dots, t^{w_n} x_n) \colon f \in I
\right\rangle
\]
for each $t \neq 0$ and $I_0 = \init_w(I)$. In particular, if $\init_w(I)$ is a toric ideal,~i.e.~a~binomial~prime~ideal, then we obtain a toric degeneration of $X$. See \cite{bossinger2021families} for a more general family of Gr\"obner degenerations.

\medskip

\noindent \textbf{Khovanskii bases.} Let $\{f_1, \dots, f_s\} \subseteq R = \CC[x_1, \dots, x_n]$ be a collection of polynomials and $w \in \RR^n$ a weight vector for $R$. Let $A = \CC[f_1, \dots, f_s] \subseteq R$ be a subalgebra of $R$. We define the initial algebra of $A$ to be
\[
\init_w(A) = \CC[\init_w(f) \colon f \in A]
\]
which is the subalgebra generated by the initial terms of elements of $A$. In general, we have $\CC[\init_w(f_1), \dots, \init_w(f_s)] \subseteq \init_w(A)$.~If equality holds,~$\{f_1, \dots, f_s\}$ is called a \textit{Khovanskii basis}~for~$A$. 

\begin{remark}\label{rmk: khovanskii bases more generally}
Khovanskii bases are defined in greater generality for certain rings equipped with discrete valuations \cite{kaveh2019khovanskii}. In this case the ring $A$ is equipped with the natural valuation induced by the weight $w$ \cite{clarke2022partialflag}. We note that, in the literature, Khovanskii bases have been known by many different names including SAGBI bases, canonical bases and subalgebra bases.
\end{remark}

We define the ideal $I \subseteq S =  \CC[y_1, \dots, y_s]$ to be the kernel of the map $S \rightarrow R$ where $y_i \mapsto f_i$. For each weight vector $w \in \RR^n$ we define the \textit{induced weight vector $\widehat w \in \RR^s$} for the ring $S$ given by $\widehat w = (w(f_1), w(f_2), \dots, w(f_s))$. The following result gives the connection between Khovanskii bases and toric degenerations.

\begin{theorem}[{\cite[Theorem~11.4]{sturmfels1996grobner}}]\label{thm: Khovanskii basis given by toric degeneration}
Fix $f_1, \dots, f_s \in R$. Assume that $w$ is \textit{generic}, i.e.~each initial form $\init_w(f_i)$ is a monomial. Then $f_1, \dots, f_s$ is a Khovanskii basis for the algebra they generate if and only if $\init_{\widehat w}(I) = \ker(S \rightarrow R \colon y_i \mapsto \init_w(f_i))$.
\end{theorem}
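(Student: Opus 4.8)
The plan is to reformulate the Khovanskii basis property in terms of two ring homomorphisms out of $S = \CC[y_1, \dots, y_s]$ and then compare them weight-degree by weight-degree. Consider $\pi \colon S \to R$, $y_i \mapsto f_i$, with kernel $I$ and image the algebra $A := \CC[f_1, \dots, f_s]$, alongside the \emph{initial map} $\pi_0 \colon S \to R$, $y_i \mapsto \init_w(f_i)$, with kernel $J$ and image $B := \CC[\init_w(f_1), \dots, \init_w(f_s)]$. Since $w$ is generic, every $\init_w(f_i)$ is a monomial, so $B$ is an affine monomial algebra and $J$ is a toric (binomial prime) ideal; moreover $J$ is precisely the ideal $\ker(S \to R \colon y_i \mapsto \init_w(f_i))$ in the statement. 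By definition $f_1, \dots, f_s$ is a Khovanskii basis for $A$ exactly when $B = \init_w(A)$, and the inclusion $B \subseteq \init_w(A)$ always holds. So the task is to prove $B = \init_w(A) \iff \init_{\widehat w}(I) = J$.

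I would first record two structural facts. Because $R$ is a domain, $\init_w$ is multiplicative, so $\init_w(f^u) = \pi_0(y^u)$ and $w(f^u) = u \cdot \widehat w$ for every monomial $y^u$ (here $f^u := f_1^{u_1}\cdots f_s^{u_s}$). Consequently, for $g = \sum_u c_u y^u \in I$, extracting the $w$-homogeneous component of degree $\widehat w(g)$ from the identity $0 = \pi(g) = \sum_u c_u f^u$ gives $\pi_0(\init_{\widehat w}(g)) = 0$; hence $\init_{\widehat w}(g) \in J$, and so $\init_{\widehat w}(I) \subseteq J$ unconditionally. The same bookkeeping shows that for arbitrary $g \in S$ we have $w(\pi(g)) \ge \widehat w(g)$, with equality if and only if $\pi_0(\init_{\widehat w}(g)) \ne 0$, in which case $\init_w(\pi(g)) = \pi_0(\init_{\widehat w}(g))$. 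For the direction $(\Leftarrow)$, assume $\init_{\widehat w}(I) = J$ and take $h \in A$ with a lift $\pi(g) = h$. If $w(h) = \widehat w(g)$ then $\init_w(h) = \pi_0(\init_{\widehat w}(g)) \in B$ by the equality case. Otherwise $\pi_0(\init_{\widehat w}(g)) = 0$, so $\init_{\widehat w}(g) \in J = \init_{\widehat w}(I)$; since $\init_{\widehat w}(I)$ is a $\widehat w$-homogeneous ideal and $\init_{\widehat w}$ is multiplicative, one extracts $G \in I$ with $\init_{\widehat w}(G) = \init_{\widehat w}(g)$ and replaces $g$ by $g - G$, a new lift of $h$ of strictly larger $\widehat w$-weight. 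Iterating (the weights are bounded above by $w(h)$) reduces to the first case, so $\init_w(A) \subseteq B$.

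For $(\Rightarrow)$, assume the Khovanskii basis property $B = \init_w(A)$. Rescaling the $y_i$ so each $\init_w(f_i)$ is monic, $J$ is generated by binomials $y^u - y^v$ with $\pi_0(y^u) = \pi_0(y^v)$; fix one and set $m := u \cdot \widehat w = v \cdot \widehat w$, so $h_0 := f^u - f^v \in A$ has $w(h_0) > m$ by multiplicativity. Now subduct: at each stage $\init_w(h_j) \in \init_w(A) = B$, say $\init_w(h_j) = \pi_0(p_{j+1})$ with $p_{j+1}$ being $\widehat w$-homogeneous of degree $w(h_j) > m$; set $h_{j+1} := h_j - \pi(p_{j+1})$, which has strictly larger $w$-weight. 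When this terminates at $h_N = 0$, the element $g := (y^u - y^v) - \sum_{j=1}^N p_j$ lies in $I$, and as every $p_j$ has $\widehat w$-degree $> m$ we get $\init_{\widehat w}(g) = y^u - y^v$. Thus every generating binomial of $J$ lies in $\init_{\widehat w}(I)$, so $J \subseteq \init_{\widehat w}(I)$; with the unconditional inclusion above, $\init_{\widehat w}(I) = J$.

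The step I expect to be the main obstacle is the termination of the subduction loops (and the companion claim that $\init_{\widehat w}(g) \in \init_{\widehat w}(I)$ lets one cancel $\init_{\widehat w}(g)$ against a genuine element of $I$): the weights produced are strictly increasing and bounded by $w(h)$, but in general one must argue that this set of real weights is discrete. In the case relevant here — $R$ carrying the graded Plücker algebra and the $f_i$ being Plücker forms of a common degree — the grading is positive and compatible with $\widehat w$, so every subduction step remains inside a fixed finite-dimensional graded component and the loops terminate. In that graded setting one can moreover bypass subduction entirely: Gröbner and SAGBI degenerations preserve Hilbert functions, so $\mathrm{HF}(S/\init_{\widehat w}(I)) = \mathrm{HF}(S/I) = \mathrm{HF}(A) = \mathrm{HF}(\init_w(A))$ and $\mathrm{HF}(S/J) = \mathrm{HF}(B)$; since $\init_{\widehat w}(I) \subseteq J$ and $B \subseteq \init_w(A)$, the equality $\init_{\widehat w}(I) = J$ is equivalent to $\mathrm{HF}(S/J) = \mathrm{HF}(S/\init_{\widehat w}(I))$, hence to $\mathrm{HF}(B) = \mathrm{HF}(\init_w(A))$, hence to $B = \init_w(A)$ — the Khovanskii basis property.
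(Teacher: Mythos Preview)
The paper does not give its own proof of this theorem; it is stated with a citation to Sturmfels and used as a black box. So there is no argument in the paper to compare yours against.

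That said, your proposal is essentially the standard proof (as in Sturmfels' book and, in the valuative language, in Kaveh--Manon): establish the unconditional inclusion $\init_{\widehat w}(I)\subseteq J$, run a lifting loop for $(\Leftarrow)$ and a subduction loop for $(\Rightarrow)$, and handle termination separately. You identify the only genuinely delicate point correctly: the loops increase real weights that are bounded, and one must know the set of attainable weights meets each bounded interval in a finite set. Your resolution in the positively graded, equal-degree case (everything stays in a fixed finite-dimensional graded piece) is exactly right for the application in this paper, and the Hilbert function argument you sketch at the end is a clean alternative that would in fact make the whole proof one line in that setting. For the fully general statement over arbitrary real weights, a little more is needed --- typically one refines $w$ by a monomial order, or argues via the finiteness of monomials of bounded degree --- and you are right to flag rather than paper over this.
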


We combine the above theorem with the definition of a coherent matching field for the Grassmannian to obtain the following.

\begin{corollary}\label{cor: toric dege via matching fields}
Let $\Lambda$ be a coherent matching field induced by the weight $M \in \RR^{k \times n}$. The Pl\"ucker forms $\det(X_I)$ form a Khovanskii basis for the Pl\"ucker algebra if and only if the matching field gives rise to a toric degeneration $\init_{\widehat M}(G_{k,n}) = \ker(\CC[P_I] \rightarrow \CC[x_{i,j}] \colon P_I \mapsto \init_M(\det(X_I))$.
\end{corollary}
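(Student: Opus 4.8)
The plan is to apply Theorem~\ref{thm: Khovanskii basis given by toric degeneration} verbatim, with $R = \CC[x_{i,j}]_{i \in [k],\, j \in [n]}$, the generating set $\{\det(X_I) : I \in \binom{[n]}{k}\}$ of the Pl\"ucker algebra $A$, the polynomial ring $S = \CC[P_I]_{I \in \binom{[n]}{k}}$, the ideal $G_{k,n} = \ker(S \to R \colon P_I \mapsto \det(X_I))$ in the role of $I$, and the weight vector on $R$ obtained by assigning to the variable $x_{i,j}$ the weight $m_{i,j}$. The whole statement is then just the conclusion of that theorem, once we check that this weight vector is \emph{generic} in the sense required there, i.e.~that each $\init_M(\det(X_I))$ is a monomial.

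To verify genericity, I would expand $\det(X_I) = \sum_{\sigma \in S_k} \operatorname{sgn}(\sigma) \prod_{i=1}^{k} x_{i, j_{\sigma(i)}}$ for $I = \{j_1 < \dots < j_k\}$ and observe that the $M$-weight of the $\sigma$-th term equals $\sum_{i=1}^{k} m_{i, j_{\sigma(i)}}$. Hence the weight of $\det(X_I)$ is $\widehat M(I)$, and since $\Lambda = \Lambda_M$ is coherent the matrix $M$ is generic in the sense of Definition~\ref{def: coherent matching field}, so this minimum is attained by the unique permutation $\sigma_I = \Lambda(I)$. Therefore $\init_M(\det(X_I)) = \operatorname{sgn}(\sigma_I) \prod_{i=1}^{k} x_{i, j_{\sigma_I(i)}}$ is a single monomial, namely (up to sign) the monomial read off from the tuple of $\Lambda$ at $I$. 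Moreover the induced weight vector $\widehat w$ of the theorem has $I$-th coordinate $M(\det(X_I)) = \widehat M(I)$, so it is exactly $\widehat M$, matching the notation already fixed.

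With these identifications, Theorem~\ref{thm: Khovanskii basis given by toric degeneration} states precisely that the Pl\"ucker forms $\det(X_I)$ form a Khovanskii basis for the Pl\"ucker algebra if and only if $\init_{\widehat M}(G_{k,n}) = \ker(\CC[P_I] \to \CC[x_{i,j}] \colon P_I \mapsto \init_M(\det(X_I)))$, which is the claimed equivalence. Finally I would note that, since each $\init_M(\det(X_I))$ is a monomial, the kernel on the right-hand side is the kernel of a monomial map, hence a binomial prime (toric) ideal, and it is unchanged if one drops the signs, so it really is the toric ideal attached to the matching field; combined with the Gr\"obner degeneration recalled above, applied with the weight $\widehat M$ on $\CC[P_I]$, the equality of ideals produces a toric degeneration of $\Gr(k,n)$, which justifies the phrase ``the matching field gives rise to a toric degeneration.'' I do not anticipate any real obstacle here: the content is purely bookkeeping, and the only point requiring a moment's care is that the $\min$-conventions used to define coherent matching fields agree with the $\min$-conventions for initial forms in Theorem~\ref{thm: Khovanskii basis given by toric degeneration}.
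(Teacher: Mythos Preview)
Your proposal is correct and is exactly the argument the paper intends: the corollary is stated immediately after Theorem~\ref{thm: Khovanskii basis given by toric degeneration} with the sentence ``We combine the above theorem with the definition of a coherent matching field for the Grassmannian to obtain the following,'' and no further proof is given. You have simply spelled out that combination in detail (checking genericity via Definition~\ref{def: coherent matching field} and identifying $\widehat w$ with $\widehat M$), which is precisely what is required.
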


For any coherent matching field $\Lambda$, the kernel of the monomial map in the above corollary is called the \textit{matching field ideal}, denoted $I_\Lambda$. By \cite[Theorem~11.3]{sturmfels1996grobner}, it is always the case that $\init_{\widehat M}(G_{k,n}) \subseteq I_\Lambda$. 

\begin{remark}\label{rmk: NO-bodies}
Following \cite[Section~2.4]{clarke2022partialflag}, we note that the polytopes of matching fields that give rise to toric degenerations are Newton-Okounkov bodies for the Grassmannian. The valuation on the Pl\"ucker algebra is derived from the weight $w$ inducing the matching field.
\end{remark}

The matching field polytopes $P_\Lambda$ are normal \cite[Proposition~3]{clarke2022partialflag}, and we have the following.

\begin{theorem}[{\cite[Theorem~1]{clarke2022partialflag}}]\label{thm: toric degenerations if mutation equivalent}
Let $\Lambda$ be a coherent matching field for the Grassmannian. If the matching field polytope $P_\Lambda$ is mutation equivalent to the Gelfand-Tsetlin polytope, then $\Lambda$ gives rise to a toric degeneration of the Grassmannian. 
\end{theorem}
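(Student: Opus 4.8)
The plan is to deduce this from Corollary~\ref{cor: toric dege via matching fields}: it suffices to establish the equality of homogeneous ideals $\init_{\widehat M}(G_{k,n}) = I_\Lambda$ inside $\CC[P_I]_{I \in \binom{[n]}{k}}$, where $M$ is a generic weight matrix inducing $\Lambda$. One containment, $\init_{\widehat M}(G_{k,n}) \subseteq I_\Lambda$, is always available (by \cite[Theorem~11.3]{sturmfels1996grobner}, as recalled after Corollary~\ref{cor: toric dege via matching fields}). Both ideals are homogeneous for the standard grading in which each Pl\"ucker variable has degree one, so the reverse containment --- and hence equality --- will follow once we check that the two graded quotient rings have the same Hilbert function. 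Thus the whole proof reduces to a Hilbert function comparison.

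First I would compute $H(\CC[P_I]/\init_{\widehat M}(G_{k,n}), t)$. Since Gr\"obner degeneration is flat, this equals $H(\CC[P_I]/G_{k,n}, t)$, the Hilbert function of the homogeneous coordinate ring of $\Gr(k,n)$ in its Pl\"ucker embedding. Applying the same observation to the Gelfand--Tsetlin weight matrix $M_D$ of Example~\ref{ex: diagonal matching field}, which induces the diagonal matching field $\MD$ and gives the classical Gelfand--Tsetlin toric degeneration \cite[Chapter~14]{MS05}, one has $\init_{\widehat{M_D}}(G_{k,n}) = I_\MD$, so $H(\CC[P_I]/G_{k,n}, t) = H(\CC[P_I]/I_\MD, t)$. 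As $I_\MD$ is the kernel of a monomial map, $\CC[P_I]/I_\MD$ is the affine semigroup ring of the lattice polytope $P_\MD$; since $P_\MD$ is normal (it is the order polytope $\MO(Q_{k,n})$ by Theorem~\ref{thm: Gr poset order polytope equals diagonal MF polytope}, and also by \cite[Proposition~3]{clarke2022partialflag}), its degree-$t$ piece has dimension equal to the Ehrhart value $|t P_\MD \cap \ZZ^{k\times n}| = E_{P_\MD}(t)$.

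Next I would compute $H(\CC[P_I]/I_\Lambda, t)$ in the same way: $I_\Lambda$ is again the kernel of a monomial map, so $\CC[P_I]/I_\Lambda$ is the affine semigroup ring of $P_\Lambda$, which is normal by \cite[Proposition~3]{clarke2022partialflag}, whence its Hilbert function is $E_{P_\Lambda}(t)$. This is exactly where the hypothesis is used: $P_\Lambda$ is mutation equivalent to the Gelfand--Tsetlin polytope $\MO(Q_{k,n}) = P_\MD$, and combinatorial mutations preserve Ehrhart polynomials by \cite[Proposition~4]{akhtar2012minkowski}, so $E_{P_\Lambda} = E_{P_\MD}$. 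Chaining the two computations gives $H(\CC[P_I]/\init_{\widehat M}(G_{k,n}), t) = E_{P_\MD}(t) = E_{P_\Lambda}(t) = H(\CC[P_I]/I_\Lambda, t)$ for all $t \ge 0$. Together with the containment above, this forces $\init_{\widehat M}(G_{k,n}) = I_\Lambda$; since $I_\Lambda$ is a binomial prime ideal, this exhibits a toric degeneration of $\Gr(k,n)$, as required.

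I expect the main obstacle to be the one input I would rather cite than reprove: the identification of the Hilbert function of the Pl\"ucker coordinate ring with the Ehrhart polynomial of the Gelfand--Tsetlin polytope. Making that self-contained would amount to re-establishing the Gelfand--Tsetlin toric degeneration (equivalently, counting degree-$t$ products of Pl\"ucker forms by semistandard tableaux of rectangular shape $(t^k)$). The only other subtlety is to ensure that $\CC[P_I]/I_\Lambda$ is genuinely the full Ehrhart semigroup ring of $P_\Lambda$ --- that is, that normality of $P_\Lambda$ yields the integer decomposition property generated by the vertices $v_{\Lambda, I}$ --- which is precisely what \cite[Proposition~3]{clarke2022partialflag} provides.
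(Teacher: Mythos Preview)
The paper does not supply its own proof of this statement: Theorem~\ref{thm: toric degenerations if mutation equivalent} is simply quoted from \cite[Theorem~1]{clarke2022partialflag} and used as a black box. So there is no in-paper argument to compare against.

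That said, your proposal is a correct reconstruction of the standard proof, and it is essentially the argument one expects to find in \cite{clarke2022partialflag}. The logical skeleton---containment $\init_{\widehat M}(G_{k,n}) \subseteq I_\Lambda$ plus equality of Hilbert functions forces equality of ideals---is exactly right, and each Hilbert-function identification is properly justified: flatness of Gr\"obner degeneration for the left-hand side, normality of $P_\Lambda$ (\cite[Proposition~3]{clarke2022partialflag}) for the right-hand side, and invariance of the Ehrhart polynomial under mutation (\cite[Proposition~4]{akhtar2012minkowski}) to bridge the two. Your closing remark about needing the integer decomposition property \emph{relative to the vertices} is the one genuinely delicate point, and you handle it correctly; it may help to note explicitly that $P_\Lambda$ is a $0/1$-polytope, so every lattice point of $P_\Lambda$ is already a vertex, whence ordinary normality suffices.
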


In the following section, we construct the FFLV-polytope for the Grassmannian as a matching field polytope. We will show that this polytope is mutation equivalent to the GT-polytope by a sequence of mutations that passes only through matching field polytopes. So, by Theorem~\ref{thm: toric degenerations if mutation equivalent}, we obtain a family of toric degenerations of the Grassmannian. By Corollary~\ref{cor: toric dege via matching fields}, for each such toric degeneration, we obtain a weight vector such that the Pl\"ucker forms are a Khovanskii basis for the Pl\"ucker algebra.

\section{FFLV-polytopes for Grassmannians}\label{sec: FFLV polytopes for gr}

In this section, we will show that the order and chain polytopes for the Grassmannian poset are matching field polytopes. Moreover, we will relate them by a sequence of mutations which passes only through matching field polytopes. We begin by fixing out notation for the section and describing connections between the Grassmannian poset, and the Gelfand-Tsetlin and Feigin–Fourier–Littelmann–Vinberg polytopes, which are usually defined for flag varieties.

\medskip

Throughout, fix $k < n $. The Grassmannian $\Gr(k,n)$ admits a toric degeneration to the toric variety associated to the diagonal matching field \cite{MS05}. The corresponding toric ideal is a Hibi ideal. That is, the ideal is generated by binomials of the form $P_\sigma P_\tau - P_{\sigma \wedge \tau} P_{\sigma \vee \tau}$ where $\sigma$ and $\tau$ are incomparable elements of a certain distributive lattice $\ML$. By Birkhoff's Representation Theorem, the lattice $\ML$ is determined by its poset of join irreducible elements. For the Grassmannian, this poset is precisely $Q_{k,n}$, see Definition~\ref{ex: grassmannian poset}. Recall that the order polytope $\MO(Q_{k,n})$ is naturally unimodular equivalent to the polytope $P_\MD$ of the diagonal matching field, see Theorem~\ref{thm: Gr poset order polytope equals diagonal MF polytope}.

The diagonal matching field gives rise to a toric degeneration of the flag variety $Fl_n$ embedded in a product of Grassmannians \cite{MS05}. The resulting projective toric variety is naturally the toric variety associated to the diagonal matching field polytope for the flag variety \cite{clarke2022partialflag}. This polytope can also be described as a Gelfand-Tsetlin polytope $\GT(\lambda)$ where $\lambda = (0,1,2, \dots, n-1)$. The polytope $\GT(\lambda)$ is the order polytope of a marked poset \cite[Section~4.1]{ardilla2011GTandFFLVPolytopes}, see Figure~\ref{fig: poset gt polytope}. Observe that the order polytope of the Grassmannian poset $\MO(Q_{k,n})$ is equal to the GT-polytope $\GT(\lambda(k,n))$ where $\lambda(k,n) := (0, \dots, 0, 1, \dots, 1)$ is the vector that contains $k$ zeros and $(n-k)$ ones. We define this polytope to be the GT-polytope of the Grassmannian.

\begin{figure}
    \centering
    \includegraphics[scale = 0.8]{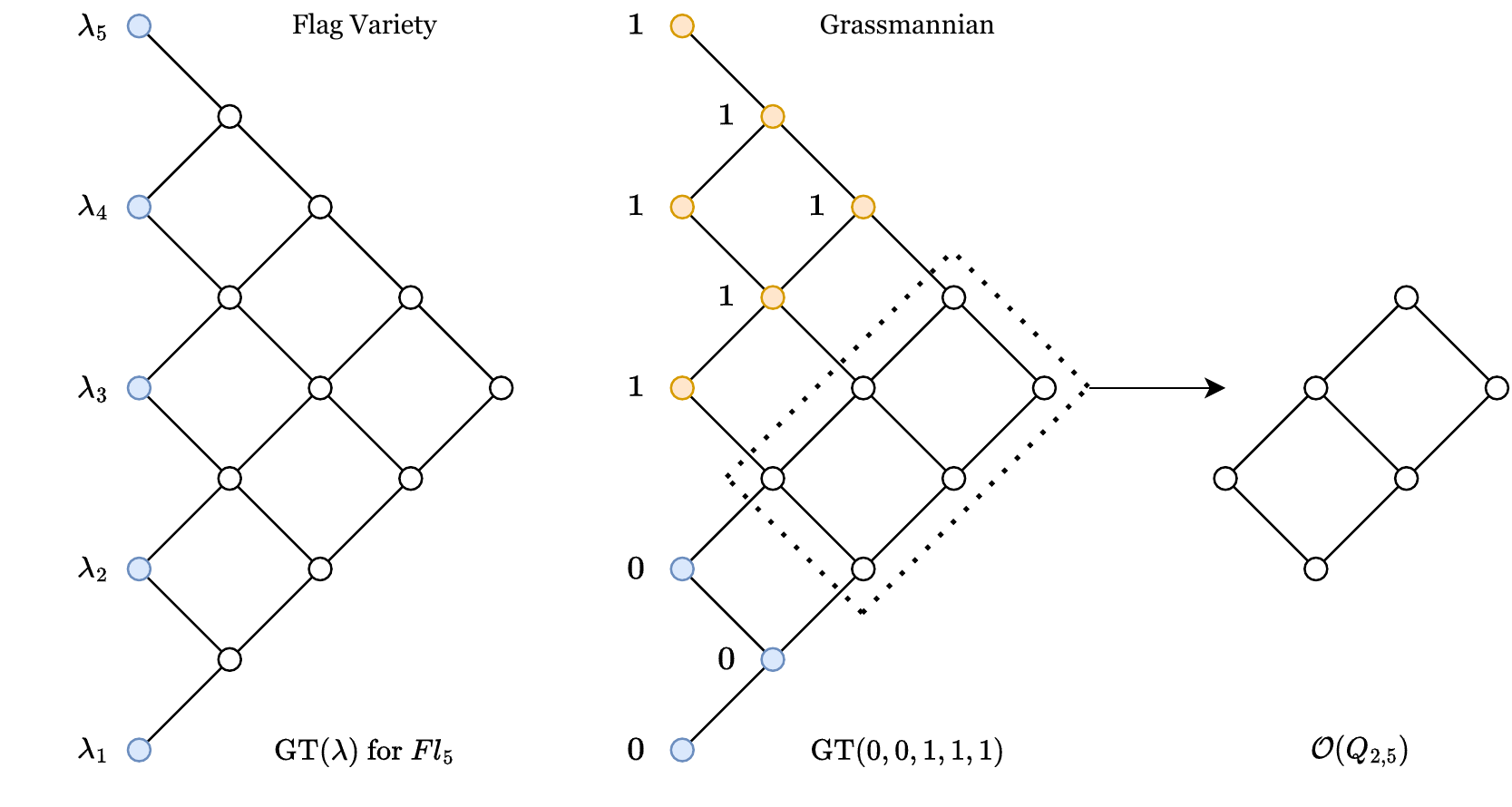}
    \caption{The Hasse diagrams for the posets associated to the flag variety (left) and the Grassmannian (right).}
    \label{fig: poset gt polytope}
\end{figure}

The Feigin–Fourier–Littelmann–Vinberg polytopes $\FFLV(\lambda)$ for the flag variety are given by the chain polytope of the same marked poset \cite{ardilla2011GTandFFLVPolytopes}. Following the above discussion of the GT-polytope for the Grassmannian, we naturally define the FFLV-polytope for the Grassmannian to be the chain polytope of the Grassmannian poset $\MC(Q_{k,n})$.

\subsection{The Grassmannian \texorpdfstring{$\Gr(3,n)$}{Gr(3,n)}}\label{sec: fflv for gr3n}

In this section, we show that the FFLV-polytope for the Grassmannian $\Gr(3,n)$ is given by the block diagonal matching field polytope in Example~\ref{ex: block diagonal matching field}.

\begin{theorem}\label{thm: gr3n fflv equals block diagonal}
Let $\MB$ be a block diagonal matching field.
The FFLV-polytope for $\Gr(3,n)$ is unimodular equivalent to the polytope $P_{\MB}$.
\end{theorem}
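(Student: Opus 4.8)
The plan is to exhibit an explicit affine unimodular map $\psi \colon \RR^{3 \times (n-3)} \to \RR^{3 \times n}$ carrying the chain polytope $\MC(Q_{3,n})$ onto $P_{\MB}$, in the spirit of the map $\widehat{\phi}$ used for the order polytope in Theorem~\ref{thm: Gr poset order polytope equals diagonal MF polytope}; I take $\MB$ to be the block diagonal matching field of Example~\ref{ex: block diagonal matching field}, the general case of the subsequent remark being analogous. Write $x_{i,j} \in \RR^{3 \times (n-3)}$ and $y_{i,j} \in \RR^{3 \times n}$ for the standard basis vectors. By Proposition~\ref{prop: vertices of order and chain polytopes} the vertices of $\MC(Q_{3,n})$ are the characteristic vectors $\chi_A$ of the antichains $A$ of $Q_{3,n}$; since $Q_{3,n}$ has only three rows, $A$ meets each row in at most one element, so $A = \{(i,a_i) : i \in S\}$ for some $S \subseteq \{1,2,3\}$ and values $a_i \in [n-3]$ strictly decreasing in $i$, and $\chi_A = \sum_{i \in S} x_{i,a_i}$.

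With $(r_1,r_2,r_3) := (2,1,n)$, I would define
\[
\psi(x) \;=\; v_{\MB,\{1,2,n\}} + L(x), \qquad L(x_{i,j}) \;=\; y_{i,\,n-j} - y_{i,\,r_i},
\]
where $v_{\MB,\{1,2,n\}} = y_{1,2} + y_{2,1} + y_{3,n}$. So $\psi(\chi_A)$ is obtained from $v_{\MB,\{1,2,n\}}$ by moving, for each $i \in S$, the $1$ of row $i$ from column $r_i$ to column $n-a_i$; setting $b_i = n-a_i$ for $i \in S$ and $b_i = r_i$ otherwise, this gives $\psi(\chi_A) = y_{1,b_1} + y_{2,b_2} + y_{3,b_3}$. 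The first step is to check that $\psi(\chi_A)$ is always a vertex of $P_{\MB}$: because $b_i \in \{3,\dots,n-1\}$ for $i \in S$ while the default columns are $r_1 = 2$, $r_2 = 1$, $r_3 = n$, a finite case analysis over the eight subsets $S$ shows that $(b_1,b_2,b_3)$ is exactly the tuple of $\MB$ for a unique $J \in \binom{[n]}{3}$, matching the two shapes $y_{1,j_1}+y_{2,j_2}+y_{3,j_3}$ ($1 \notin J$) and $y_{1,j_2}+y_{2,1}+y_{3,j_3}$ ($1 \in J$) of Example~\ref{ex: block diagonal matching field}; here $1 \in J$ holds precisely when $2 \notin S$, i.e.\ when row $2$ of $A$ is empty. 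Reading off this correspondence gives an explicit map $A \mapsto J(A)$, and a short check confirms it is a bijection between antichains of $Q_{3,n}$ and $\binom{[n]}{3}$. Since $\psi$ is affine, it then maps $\MC(Q_{3,n})$ onto $\conv\{v_{\MB,J} : J \in \binom{[n]}{3}\} = P_{\MB}$.

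It remains to verify that $\psi$ is unimodular, that is, that $L$ restricts to an isomorphism from $\ZZ^{3\times(n-3)}$ --- the $\ZZ$-affine span of the vertices of $\MC(Q_{3,n})$ --- onto the distinguished lattice of $P_{\MB}$, the $\ZZ$-affine span of $\{v_{\MB,J} : J \in \binom{[n]}{3}\}$. This is straightforward: for each fixed $i$ the vectors $\{L(x_{i,j})\}_j$ involve pairwise distinct basis vectors of the $i$-th row of $\RR^{3\times n}$, so $L$ is injective and the $L(x_{i,j})$ form a basis of their span; and since $L(x_{i,j}) = v_{\MB,J(\{(i,j)\})} - v_{\MB,\{1,2,n\}}$ and every $v_{\MB,J} - v_{\MB,\{1,2,n\}}$ equals $\sum_{(i,j)\in A(J)} L(x_{i,j})$, that span is exactly the distinguished lattice.

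The hard part will be the case analysis in the middle step: one has to see that a single globally linear map $L$, after the right choice of base columns $(r_1,r_2,r_3)$ and of the reversal $j \mapsto n-j$, reproduces over all eight shapes of $A$ the row-swap rule defining the block diagonal matching field. This is exactly what forces the relevant matching field to be the block diagonal one rather than the diagonal one, and it is consistent with, rather than in tension with, the fact that the bijection on vertices coming from Stanley's transfer map (a filter mapping to its antichain of minimal elements), composed with the identification of Theorem~\ref{thm: Gr poset order polytope equals diagonal MF polytope}, is only piecewise linear and not affine.
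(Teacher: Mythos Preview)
Your proof is correct and takes essentially the same approach as the paper: both exhibit an explicit affine-linear bijection between the vertices of $\MC(Q_{3,n})$ and those of $P_{\MB}$, verified by a finite case analysis over the shapes of antichains, and then observe that the map is unimodular because the ``active'' columns $\{3,\dots,n-1\}$ are disjoint from the default columns $\{1,2,n\}$. The only cosmetic difference is direction and coordinates: the paper uses the projection $\Pi\colon \RR^{3\times n}\to\RR^{3\times(n-3)}$ killing the basis vectors in columns $1,2,n$ (with a row-flip and shift $j\mapsto j+2$ in its labelling of $Q_{3,n}$), whereas you build the inverse embedding $\psi$ with base point $v_{\MB,\{1,2,n\}}$ and column map $j\mapsto n-j$, following the template of Theorem~\ref{thm: Gr poset order polytope equals diagonal MF polytope}.
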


\begin{proof}
We begin by relabelling the elements of the poset $Q_{3,n}$ as $z_{i+2} := (1,i)$, $y_{i+2} := (2,i)$ and $x_{i+2} := (3,i)$ for each $i \in \{1,2,\dots,n-3\}$.  Note, with this notation, the elements of the poset are $\{x_i, y_i, z_i \colon 3 \le i \le n-1 \}$.
The anti-chains of $Q_{k,n}$ are the sets:
\begin{itemize}
    \item The empty set $\emptyset$,
    \item All singletons $\{x_i \}, \{y_i \}$ and $\{z_i \}$ with $3 \le i \le n-1$,
    \item Two-subsets $\{x_i, y_j \}$, $\{ x_i, z_j\}$ and $\{y_i, z_j \}$ where $3 \le i < j \le n-1$,
    \item Three-subsets $\{x_i, y_j, z_\ell \}$ where $3 \le i < j < \ell \le n-1$.
\end{itemize}

Recall that the tuples of the block diagonal matching field $\MB$
are $(i, 1, j)$ where $1 < i < j \le n$ and $(i, j, \ell)$ where $2 \le i < j < \ell < n$. We identify each triples with the corresponding vertex of the matching field polytope $\MP_{\MB} \subset \RR^{3 \times n}$. Explicitly, we fix a basis for $\RR^{3 \times n}$ given by $E = \{x_i, y_i, z_i : 1 \le i \le n\}$. A tuple $(i,j,\ell) \in [n]^3$ is identified with the corresponding point in $\RR^{3 \times n}$, which is $x_i + y_j + z_\ell$. We define the subspace $\RR^{3 \times (n-3)} \subseteq \RR^{3 \times n}$ as the span of $F = \{x_i, y_i, z_i : 3 \le i \le n-1 \}$.

We proceed to show that the FFLV-polytope and $P_\MB$ are unimodular equivalent by considering a projection of the matching field polytope. Consider the projection map
\[
\Pi : \RR^{3 \times n} \rightarrow \RR^{3 \times (n-3)}
\]
defined by its action on the basis $E$ by fixing $F$ and sending all elements in $E \backslash F$ to zero. The image of the vertices of $P_{\MB}$ under $\Pi$ are shown in Table~\ref{tab: vertices Gr3n under projection}.

\begin{table}
    \centering
    \begin{tabular}{lll}
        \toprule
        $v \in P_\MB$ & $\Pi(v)$ & Conditions  \\
        \midrule
        $(2,1,i)$ & $z_i$ & $3 \le i \le n-1$ \\
        $(2,1,n)$ & $0$ & \\
        $(i,1,j)$ &  $x_i + z_j$ & $3 \le i < j \le n-1$ \\
        $(i,1,n)$ &  $x_i$ & $3 \le i \le n-1$ \\
        $(2,i,j)$ &  $y_i + z_j$ & $3 \le i < j \le n-1$ \\
        $(2,i,n)$ &  $y_i$ & $3 \le i \le n-1$\\
        $(i,j,\ell)$ & $x_i + y_j + z_\ell$ & $3 \le i < j < \ell \le n-1$ \\
        $(i,j,n)$ & $x_i + y_j$ & $3 \le i < j \le n-1$ \\
        \bottomrule
    \end{tabular}
    \caption{Vertices of $P_\MB$ under the projection $\Pi$ in the proof of Theorem~\ref{thm: gr3n fflv equals block diagonal}}
    \label{tab: vertices Gr3n under projection}
\end{table}

The projection $\Pi$ sends the vertices of $\MP_\MB$ to the vertices of the FFLV-polytope, which are the points corresponding to the anti-chains of the poset. Therefore, the image of $\MP_{\MB}$ under $\Pi$ is the FFLV-polytope. Since the vertices of $\MP_{\MB}$ avoid the basis vectors $x_1, x_n, y_2, y_n, z_1$ and $z_2$ in $E$, the projection $\Pi$ gives rise to a unimodular equivalence of polytopes.
\end{proof}

Recall that the block diagonal matching field polytopes are mutation equivalent to the GT-polytope by Theorem~\ref{thm: block diagonal comb mutation equiv}. By Theorem~\ref{thm: gr3n fflv equals block diagonal}, the FFLV-polytope is unimodular equivalent to the block diagonal matching field polytope for $\Gr(3,n)$, which gives us the following result.

\begin{corollary}
The GT-polytope and FFLV-polytope for $\Gr(3,n)$ are related by a sequence of combinatorial mutations that pass only through matching field polytopes.
\end{corollary}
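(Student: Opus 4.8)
The plan is to chain together the two results already available in the excerpt. By Theorem~\ref{thm: gr3n fflv equals block diagonal}, the FFLV-polytope $\MC(Q_{3,n})$ for $\Gr(3,n)$ is unimodular equivalent to the block diagonal matching field polytope $P_\MB$, via the projection $\Pi$ constructed there. By Theorem~\ref{thm: block diagonal comb mutation equiv}, the polytopes $P_\MB$ and $P_\MD$ are mutation equivalent through a sequence passing only through matching field polytopes. Finally, by Theorem~\ref{thm: Gr poset order polytope equals diagonal MF polytope}, the GT-polytope $\MO(Q_{3,n})$ is unimodular equivalent to $P_\MD$. Splicing these together yields a sequence GT $\sim P_\MD \rightsquigarrow \cdots \rightsquigarrow P_\MB \sim$ FFLV, where $\sim$ denotes unimodular equivalence and $\rightsquigarrow$ a combinatorial mutation through matching field polytopes.

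The one point requiring care is that the chain above is not literally a sequence of combinatorial mutations through matching field polytopes, because of the unimodular equivalences at the two ends. First I would note that a unimodular equivalence is itself a (trivial) combinatorial mutation — taking $F$ to be a point, or more precisely observing that mutation equivalence is compatible with unimodular change of lattice coordinates, so one may transport the whole mutation sequence of Theorem~\ref{thm: block diagonal comb mutation equiv} across the unimodular maps. Concretely, if $\psi$ is the unimodular map with $\psi(P_\MD) = \MO(Q_{3,n})$ and $\chi$ is the one with $\chi(P_\MB) = \MC(Q_{3,n})$, then applying $\psi$ (and later $\chi$) to each polytope and each tropical map $\varphi_{w,F}$ in the block diagonal sequence produces a mutation sequence between $\MO(Q_{3,n})$ and $\MC(Q_{3,n})$; the intermediate polytopes are unimodular images of matching field polytopes, hence again matching field polytopes (matching field polytopes are only defined up to the natural lattice of $\RR^{k\times n}$ in any case, and the relevant normality and combinatorial properties are preserved). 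Since $k = 3$ throughout, there is no loss in specializing the general statements to this case.

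The main obstacle — really the only nontrivial verification — is bookkeeping: confirming that conjugating a tropical map $\varphi_{w,F}$ by a unimodular map $\psi$ again gives a tropical map, namely $\psi \circ \varphi_{w,F} \circ \psi^{-1} = \varphi_{\psi(w), \psi^{-*}(F)}$ up to the appropriate identification of the $M$- and $N$-lattices as in Remark~\ref{rmk: toric geometry M and N lattice convention}, so that combinatorial mutations are genuinely conjugation-invariant and lattice-polytope images stay lattice polytopes. This is standard and is implicit in the framework of Section~\ref{sec: prelim Mutations and Polytopes}, so in the writeup I would state it briefly and then simply invoke it. Everything else is a direct concatenation of the cited theorems, so the proof is short.

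\begin{proof}
By Theorem~\ref{thm: Gr poset order polytope equals diagonal MF polytope}, the GT-polytope $\MO(Q_{3,n})$ is unimodular equivalent to the diagonal matching field polytope $P_\MD$. By Theorem~\ref{thm: gr3n fflv equals block diagonal}, the FFLV-polytope $\MC(Q_{3,n})$ is unimodular equivalent to the block diagonal matching field polytope $P_\MB$. By Theorem~\ref{thm: block diagonal comb mutation equiv}, the polytopes $P_\MD$ and $P_\MB$ are mutation equivalent through a sequence of combinatorial mutations in which every intermediate polytope is a matching field polytope. Since a combinatorial mutation is carried to a combinatorial mutation under any unimodular change of coordinates (conjugating the tropical map $\varphi_{w,F}$ by a unimodular map $\psi$ yields again a tropical map, with data the $\psi$-images of $w$ and $F$), we may transport this sequence across the two unimodular equivalences above. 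This produces a sequence of combinatorial mutations from $\MO(Q_{3,n})$ to $\MC(Q_{3,n})$, and every intermediate polytope, being the unimodular image of a matching field polytope, is again a matching field polytope. Hence the GT-polytope and the FFLV-polytope for $\Gr(3,n)$ are related by a sequence of combinatorial mutations that pass only through matching field polytopes.
\end{proof}
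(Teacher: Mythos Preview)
Your proof is correct and takes essentially the same approach as the paper: the corollary is stated there as an immediate consequence of Theorem~\ref{thm: block diagonal comb mutation equiv} and Theorem~\ref{thm: gr3n fflv equals block diagonal}, with the GT-polytope already identified with $P_\MD$ via Theorem~\ref{thm: Gr poset order polytope equals diagonal MF polytope}. The paper simply \emph{identifies} $\MO(Q_{3,n})$ with $P_\MD$ and $\MC(Q_{3,n})$ with $P_\MB$ rather than transporting the mutation sequence across the unimodular maps as you do, so your extra bookkeeping about conjugating tropical maps is more careful than what the paper writes but not a genuinely different argument.
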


\subsection{FFLV matching field polytope for the Grassmannian \texorpdfstring{$\Gr(k,n)$}{Gr(k,n)}}\label{sec: fflv for grkn}

We now turn our attention to general Grassmannians $\Gr(k,n)$. A straightforward computation shows that Theorem~\ref{thm: gr3n fflv equals block diagonal} does not extend to $\Gr(4,8)$, i.e.~the chain polytope $\MC(Q_{k,n})$ is not unimodular equivalent to any block diagonal matching field polytope. Therefore, we define a new class of coherent matching fields which will give rise to the polytope $\MC(Q_{k,n})$.

\medskip

Throughout this section, we fix $k < n$. We define the $k \times n$ matrix $\rm Diag$ which gives rise to the diagonal matching field, see Example~\ref{ex: diagonal matching field}.~The entries of $\rm Diag$ are given by $({\rm Diag})_{i,j} = (i-1)(n+1-j)$. We give an explicit formulation of a matrix that induces the FFLV~matching~field.

\begin{definition}\label{def: fflv weight vector and matching field}
Let $N \in \mathbb{N}$ be a sufficiently large integer, for example $N = n^3$ is sufficient. Let $D = (d_{i,j})$ be the $k \times n$ matrix where $d_{i,j} = N$ if $i = j$ and $d_{i,j} = 0$ otherwise. We define
\[
M^{\rm FFLV} = {\rm Diag} - D.
\]

The matrix $M^{\rm FFLV}$ induces a matching field which we denote $\MB_{k,n}$. The requirement that `$N$ is sufficiently large' is equivalent to the condition that $\MB_{k,n}$ is well-defined. We call $\MB_{k,n}$ the \textit{FFLV matching field} for $\Gr(k,n)$.
\end{definition}

Before we state the result, let us first see an example of the matching field $\MB_{k,n}$.

\begin{example}\label{ex: FFLV matching field in gr37 case}
Let us consider the example for $\Gr(3,7)$. The diagonal matching field is induced by the matrix
\[
{\rm Diag} = 
\begin{bmatrix}
0 & 0 & 0 & 0 & 0 & 0 & 0 \\
7 & 6 & 5 & 4 & 3 & 2 & 1 \\
14 &12 &10 & 8 & 6 & 4 & 2
\end{bmatrix}.
\]
In this case we take $N = 20$, which is sufficiently large. We form the matrix $D$ and subtract it from $\rm Diag$ to obtain $M^{\rm FFLV}$
\[
\begin{bmatrix}
0 & 0 & 0 & 0 & 0 & 0 & 0 \\
7 & 6 & 5 & 4 & 3 & 2 & 1 \\
14 &12 &10 & 8 & 6 & 4 & 2
\end{bmatrix}
-
\begin{bmatrix}
  20 &  0 & 0& 0 & 0 & 0 & 0 \\
  0 & 20 & 0 & 0 & 0 & 0 & 0 \\
 0 &  0 & 20 & 0 & 0 & 0 & 0 
\end{bmatrix}
=
\begin{bmatrix}
 -20 & 0 &0& 0 & 0 & 0 & 0 \\
 7 &-14& 5 & 4 & 3 & 2 & 1 \\
14 & 12&-10 & 8 & 6 & 4 & 2
\end{bmatrix}.
\]

Suppose $S = \{s,t,u\} \subset [7]$ where $1 \le s < t < u \le 7$. The tableau representation of the tuples associated to $S$ with respect to $\MB_{3,6}$ is given by:
\[
\begin{tabular}{c c c c c c c c}
\begin{tabular}{|c|}
\hline
    $s$ \\
    $t$ \\
    $u$ \\
\hline
\end{tabular} &
\begin{tabular}{|c|}
\hline
    $1$ \\
    $t$ \\
    $u$ \\
\hline
\end{tabular} &
\begin{tabular}{|c|}
\hline
    $t$ \\
    $2$ \\
    $u$ \\
\hline
\end{tabular} &
\begin{tabular}{|c|}
\hline
    $t$ \\
    $u$ \\
    $3$ \\
\hline
\end{tabular} &
\begin{tabular}{|c|}
\hline
    $1$ \\
    $2$ \\
    $u$ \\
\hline
\end{tabular} &
\begin{tabular}{|c|}
\hline
    $1$ \\
    $u$ \\
    $3$ \\
\hline
\end{tabular} &
\begin{tabular}{|c|}
\hline
    $u$ \\
    $2$ \\
    $3$ \\
\hline
\end{tabular} & 
\begin{tabular}{|c|}
\hline
    $1$ \\
    $2$ \\
    $3$ \\
\hline
\end{tabular}
\\
if $s > 3$; &
if $s = 1$, &
if $s = 2$, &
if $s = 3$; &
if $s = 1$, &
if $s = 1$, &
if $s = 2$, &
if $s = 1$, \\
&
$t > 3$; &
$t > 3$; &
&
$t = 2$,&
$t = 3$;&
$t = 3$;&
$t = 2$,
\\
& & & &
$u > 3$; &
&
&
$u = 3$.
\end{tabular}
\]
For further details, see Section~\ref{sec: prelim matching fields and polytopes} or equivalently the image of the Pl\"ucker variable $P_S$ under the monomial map associated to $\MB_{3,6}$ in Corollary~\ref{cor: toric dege via matching fields}. In general, for $\Gr(k,n)$, we form the tuple associated to $S$ by putting each $i$ with $1 \le i \le k$ in the $i$-th position and arranging the remaining entries in increasing order. 
\end{example}

\begin{theorem} \label{thm: gr chain poset equals fflv polytope}
The FFLV-polytope for $\Gr(k,n)$ is unimodular equivalent to the matching field polytope $P_{\MB_{k,n}}$.
\end{theorem}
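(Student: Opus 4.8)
The plan is to mimic the proof of Theorem~\ref{thm: gr3n fflv equals block diagonal}: exhibit an explicit linear projection $\Pi \colon \RR^{k \times n} \to \RR^{k \times (n-k)}$ that carries the matching field polytope $P_{\MB_{k,n}}$ bijectively onto the chain polytope $\MC(Q_{k,n})$, and verify that this projection is unimodular. First I would set up coordinates: use the standard basis $\{e_{i,j}\}$ of $\RR^{k \times n}$ and identify $\RR^{k \times (n-k)}$ with the coordinate subspace spanned by $\{e_{i,j} \colon i \in [k],\ k+1 \le j \le n\}$ (or, following the relabelling in the $\Gr(3,n)$ proof, index the poset $Q_{k,n}$ by pairs $(i, i+1), (i,i+2), \dots, (i,n-1)$ in row $i$). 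The projection $\Pi$ fixes this subspace and kills the remaining basis vectors $e_{i,j}$ with $j \le k$ and $j = n$ — essentially the vectors sitting in the ``diagonal positions'' that the matrix $D$ singles out, plus the last column.

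The key computational step is to describe the vertices of $P_{\MB_{k,n}}$ explicitly and compute their images under $\Pi$. By Definition~\ref{def: fflv weight vector and matching field} and the recipe at the end of Example~\ref{ex: FFLV matching field in gr37 case}, the tuple associated to $S = \{s_1 < \dots < s_k\}$ is obtained by placing $i$ in position $i$ whenever $i \in \{s_1,\dots,s_k\}$ and filling the remaining positions with the remaining elements of $S$ in increasing order; so the vertex is $v_S = \sum_{i \colon i \in S} e_{i,i} + \sum_{\text{remaining positions } p} e_{p, (\text{$p$-th smallest non-fixed element})}$. Applying $\Pi$ deletes exactly the summands $e_{i,i}$ (those are the fixed positions, with column index $i \le k$) and also any summand landing in column $n$; what remains is the characteristic vector of a set of ``off-diagonal'' pairs, which one checks is precisely an anti-chain of $Q_{k,n}$ — and conversely every anti-chain arises this way. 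This is the heart of the argument: one must match the combinatorics of ``$S$ with its fixed-point pattern removed'' against the anti-chain description of $Q_{k,n}$ recalled in the $\Gr(3,n)$ proof (an anti-chain picks at most one element from each row, with strictly increasing column indices as the row index increases). I expect this bookkeeping — getting the correspondence between Plücker sets $S$, their induced tuples, and anti-chains exactly right, uniformly in $k$ — to be the main obstacle, since unlike the $k=3$ case it cannot be displayed in a finite table and requires a clean general bijection.

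Once the vertex correspondence is established, the remaining points are routine: $\Pi$ is linear hence preserves convexity, so $\Pi(P_{\MB_{k,n}})$ is the convex hull of the images of the vertices, which by the previous step is exactly the convex hull of the characteristic vectors of anti-chains, i.e.\ $\MC(Q_{k,n})$ by Proposition~\ref{prop: vertices of order and chain polytopes}. For unimodularity, I would argue as in Theorem~\ref{thm: gr3n fflv equals block diagonal}: the vertices of $P_{\MB_{k,n}}$ never use the basis vectors that $\Pi$ kills — concretely, for any Plücker set $S$, position $i$ of the induced tuple equals $i$ only when $i \in S$, in which case that summand is $e_{i,i}$ with $i \le k < $ any non-fixed column; and column $n$ only appears as $e_{p,n}$ when $n \in S$, and one checks these correspond to ``truncated'' anti-chains that still inject — so the restriction of $\Pi$ to the affine lattice spanned by the vertices of $P_{\MB_{k,n}}$ is a bijection onto $\ZZ^{k \times (n-k)}$. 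Hence $\Pi$ restricts to a unimodular equivalence $P_{\MB_{k,n}} \cong \MC(Q_{k,n})$, which is the FFLV-polytope for $\Gr(k,n)$ by the discussion preceding Section~\ref{sec: fflv for gr3n}. A secondary point worth checking carefully is that $M^{\rm FFLV}$ with $N$ large is genuinely generic (so $\MB_{k,n}$ is well-defined), but this should follow because subtracting $N e_{i,i}$ forces $i$ into position $i$ whenever $i \in S$ and otherwise leaves the diagonal-matching-field ordering, and $\rm Diag$ is already generic.
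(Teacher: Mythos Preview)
Your overall strategy---project away the ``trivial'' coordinates and check that the vertices of $P_{\MB_{k,n}}$ map bijectively to characteristic vectors of anti-chains---is exactly the paper's approach. However, your projection is wrong: you propose to kill the basis vectors $e_{i,j}$ with $j \le k$ \emph{and} $j = n$, but the correct projection kills only the columns $1,\dots,k$ and keeps column $n$. Concretely, the paper's map is $\Pi(e_{i,j}) = y_{i,j-k}$ for $k+1 \le j \le n$ and $\Pi(e_{i,j}) = 0$ for $j \le k$. If you also kill column $n$, you land in a space of rank $k(n-k-1)$, whereas $\MC(Q_{k,n})$ is full-dimensional of rank $k(n-k)$; the map cannot be a unimodular equivalence. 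The same confusion shows in your alternative relabelling ``index the poset $Q_{k,n}$ by pairs $(i,i+1),\dots,(i,n-1)$ in row $i$'': this would give rows of varying length $n-1-i$, but every row of $Q_{k,n}$ has exactly $n-k$ elements.

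The source of the error is that you are transporting the projection from Theorem~\ref{thm: gr3n fflv equals block diagonal} too literally. That theorem concerns the \emph{block diagonal} matching field $\MB$ of Example~\ref{ex: block diagonal matching field}, which is \emph{not} the same as the FFLV matching field $\MB_{k,n}$ even when $k=3$ (compare the tuple of $\{2,3,4\}$: block diagonal gives $(2,3,4)$, FFLV gives $(4,2,3)$). For $\MB_{k,n}$, the entries of a tuple lying in $[k]$ occupy exactly the diagonal positions $e_{i,i}$, and the remaining entries---all of them in $\{k+1,\dots,n\}$, including possibly $n$---fill the leftover positions in increasing order. Killing precisely columns $1,\dots,k$ therefore removes the diagonal contributions and leaves a sum $\sum y_{p_t,\, s_t - k}$ with strictly increasing row and column indices, i.e.\ the characteristic vector of an anti-chain under the bijection $(i,j) \leftrightarrow y_{k+1-i,j}$. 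With this corrected projection the rest of your outline (vertex bijection, linearity, unimodularity via avoidance of the killed coordinates) goes through as in the paper.
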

\begin{proof}
Let us consider the chain polytope $\MC(Q_{k,n})$ of $Q_{k,n}$ whose vertices correspond to anti-chains. This polytope naturally lives in $\RR^{k \times (n-k)}$, which has a basis in bijection with the elements $Q_{k,n}$. Let $Y = \{y_{i,j} : i \in [k], j \in [n-k] \}$ be a basis for $\RR^{k \times (n-k)}$ and define a bijection between $Y$ and $Q_{k,n}$ by
\[
(i,j) \longleftrightarrow y_{k+1-i, j}.
\]
We write the vertices of the chain polytope with respect to this bijection. So, the empty set corresponds to the zero vector; each singleton subset $\{(i,j)\}$ corresponds to the vector $y_{k+1-i, j}$; an anti-chain $\{ (i,j), (\ell,m)\}$ corresponds to the vector $y_{k+1-i,j} + y_{k+1-\ell, m}$; and so on. Observe that a pair of poset elements are incomparable if and only if the corresponding basis elements $y_{i,j}$ and $y_{i',j'}$ satisfy: $i < i'$ and $j < j'$, or $i' < i$ and $j' < j$.

Since the matching field polytope $P_{\MB_{k,n}}$ naturally lives in $\RR^{k\times n}$ with basis $\{e_{i,j} \colon i \in [k], j \in [n] \}$, we now define a projection
\[
\Pi : \RR^{k \times n} \rightarrow \RR^{k \times (n-k)}\quad\text{with}\quad e_{i,j} \mapsto
\left\{
\begin{tabular}{ll}
    $y_{i,j-k}$ & if $k+1 \le j \le n$, \\
    $0$ & otherwise.
\end{tabular}
\right.
\]
We show that $\Pi$ induces a unimodular equivalence between $P_{\MB_{k,n}}$ and $\MC(Q_{k,n})$. For each $k$-subset $I = \{i_1 < \dots < i_k \}$, consider the corresponding tuple of the matching field $\MB_{k,n}$. The tuple is uniquely determined by the following criteria:
\begin{itemize}
    \item If $i_j \in [k]$, then it appears in position $i_j$ in the tuple,
    \item The entries $i_j$ that do not lie in $[k]$, appear in increasing order in the tuple.
\end{itemize}
Under the map $\Pi$, the basis vectors $e_{i,j}$ where $j \in [k]$, which correspond to the entries of the tuple that lie in $[k]$, are sent to $0$. The remaining entries of the tableau are mapped by $e_{i,j} \mapsto y_{i,j-k}$ for $k+1 \le j \le n$. Since the entries of the tuple that are not killed by $\Pi$ appear in increasing order, it follows that the basis vectors that appear in $\Pi(v_I)$ correspond to pairwise incomparable elements of $Q_{k,n}$, i.e.~an anti-chain. So we have showed that each vertex $v_I$ is mapped to a vertex of the chain polytope. It remains to show that this map between vertices is injective.

By the above criteria for tuples of the matching field $\MB_{k,n}$, the subset $I$ is uniquely determined by the entries that lie in $\{k+1,\dots,n \}$ and their position in the tuple. This is precisely the information given by the image of a vertex under the map $\Pi$. Therefore, the map $\Pi$ restricts to a bijection between the vertices of $P_{\MB_{k,n}}$ and $\MC(Q_{k,n})$. Since $\Pi$ is a projection, it follows that these polytopes are unimodular equivalent.
\end{proof}

\subsection{Intermediate matching field polytopes}\label{sec: intermediate matching field polytopes}

In this section, we construct the intermediate polytopes that appears in the sequence of mutations from the GT-polytope to the FFLV-polytope in the proof of Theorem~\ref{thm:main_theorem}. We proceed by introducing a sequence of coherent matching fields that interpolates between the diagonal matching field and the FFLV matching field. As an outline of the construction: we first define a sequence of triples $S(k,n)$, we use these triples to define a sequence of weight matrices $M(k,n)$, and we then show that the weight matrices induce the desired matching fields $B(k,n)$.

\begin{remark} \label{rmk: two fflv weight vectors}
In the construction below, we give an alternative description of a weight matrix that induces the FFLV matching field. We note that the weight matrix $M^{\rm FFLV}$ simplifies the proof of Theorem~\ref{thm: gr chain poset equals fflv polytope} because the defining criteria for the tuples of the matching field immediately follow from $M^{\rm FFLV}$, see the proof of Theorem~\ref{thm: gr chain poset equals fflv polytope} and Example~\ref{ex: FFLV matching field in gr37 case}. However, the definition below is amenable to the construction of combinatorial mutations because the intermediate matching fields are defined inductively by permuting the entries of the weight matrices.
\end{remark}

Throughout this section, fix $k < n$. Recall $\MD$ the diagonal matching field for $\Gr(k,n)$ induced by the generic weight matrix $M_D$, see Example~\ref{ex: diagonal matching field}.
Observe that $M_D$ has the property that, for any $i \in [k-1]$, the difference between any pair of entries in row $i$ is less than the difference between any two distinct entries in row $i+1$. Therefore, any row-wise permutation of the matrix $M_D$ induces a coherent matching field. 

\medskip

We inductively define a finite sequence $S(k,n)$ of triples $(p_i, \ell_i, q_i) \in [n]^3$ with $i \ge 0$ as follows. We define $(p_0, \ell_0, q_0) = (k+1, k, n)$. Given $(p_i, \ell_i, q_i)$ for some $i \ge 0$, then $(p_{i+1}, \ell_{i+1}, q_{i+1})$ is given by:
\begin{itemize}
    \item $(p_i, \ell_i, q_i + 1)$ if $q_i < n$,
    \item $(p_i, \ell_i + 1, k+1)$ if $q_i = n$ and $\ell_i < p_i - 1$,
    \item $(p_i - 1, 1, k+1)$ if $p_i > 2$, $\ell_i = p_i - 1$ and $q_i = n$.
\end{itemize}
The sequence terminates at the triple $(2, 1, n)$.

\smallskip

For example if $k = 3$ and $n = 6$ then the sequence of triples is given by 
\[
(436, 314, 315, 316, 324, 325, 326, 214, 215, 216).
\]

\medskip

We inductively define a finite sequence $M(k,n) = (M_0, M_1, \dots)$ of weight matrices with one weight matrix for each entry of $S(k,n)$. The first weight matrix in the sequence is $M_0 = M_D$. See Example~\ref{ex: diagonal matching field}. Assume that we have defined the weight matrix $M_{i-1}$ for some $i \ge 1$. Then the weight matrix $M_i$ is obtained from $M_{i-1}$ by swapping the entries $(M_{i-1})_{\ell_i+1, p_i}$ and $(M_{i-1})_{\ell_i + 1, q_i}$.

\begin{example} \label{example: gr36 intermediate weight matrices}
For $(k,n) = (3,6)$ the first few weight matrices in the sequence $M(3,6)$ are:
\[
M_D = \begin{bmatrix}
0 & 0 & 0 & 0 & 0 & 0 \\
5 & 4 & 3 & 2 & 1 & 0 \\
30 & 24 & 18 & 12 & 6 & 0
\end{bmatrix},
\begin{bmatrix}
0 & 0 & 0 & 0 & 0 & 0 \\
5 & 4 & \boxed{2} & \boxed{3} & 1 & 0 \\
30 & 24 & 18 & 12 & 6 & 0
\end{bmatrix},
\begin{bmatrix}
0 & 0 & 0 & 0 & 0 & 0 \\
5 & 4 & \boxed{1} & 3 & \boxed{2} & 0 \\
30 & 24 & 18 & 12 & 6 & 0
\end{bmatrix},
\]
\[
\begin{bmatrix}
0 & 0 & 0 & 0 & 0 & 0 \\
5 & 4 & \boxed{0} & 3 & 2 & \boxed{1} \\
30 & 24 & 18 & 12 & 6 & 0
\end{bmatrix},
\begin{bmatrix}
0 & 0 & 0 & 0 & 0 & 0 \\
5 & 4 & 0 & 3 & 2 & 1 \\
30 & 24 & \boxed{12} & \boxed{18} & 6 & 0
\end{bmatrix},
\begin{bmatrix}
0 & 0 & 0 & 0 & 0 & 0 \\
5 & 4 & 0 & 3 & 2 & 1 \\
30 & 24 & \boxed{6} & 18 & \boxed{12} & 0
\end{bmatrix},\dots
\]
where the swapped entries are shown in boxes.
\end{example}

Each weight matrix $M_i$ induces a coherent matching field, which we write as $B(k,n)_i$. The polytope corresponding to the matching field $B(k,n)_i$ is denoted $P(k,n)_i \subseteq \RR^{k \times n}$. The tuples that define the matching field can be inductively described as follows:

\begin{proposition}\label{prop gr fflv matching field inductive description}
Fix $k < n$ and $i \in \{1, \dots, \#S(k,n) \}$. Then the matching field
$B(k,n)_i$ is given by the tuples
\begin{align*}
    B(k,n)_i &= \{(j_1, \dots, j_k) \in B(k,n)_{i-1} \colon j_\ell \neq p_i \text{ or } j_{\ell+1} \neq q_i \}\\
    & \cup \{(j_1, \dots, j_{\ell-1}, q_i, p_i, j_{\ell+1}, \dots, j_k) \colon (j_1, \dots, j_k ) \in B(k,n)_{i-1}, \ j_\ell = p_i \text{ and } j_{\ell+1} = q_i \}.
\end{align*}
In other words, the tuples of $B(k,n)_i$ are obtained from $B(k,n)_{i-1}$ by swapping the positions of $p_i$ and $q_i$ if they appear in positions $\ell$ and $\ell+1$.
\end{proposition}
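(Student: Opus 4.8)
The plan is to prove the proposition by induction on $i$, analysing how the single entry-swap in passing from $M_{i-1}$ to $M_i$ changes the induced matching field. Recall that $M_i$ is obtained from $M_{i-1}$ by swapping the entries $(M_{i-1})_{\ell_i+1, p_i}$ and $(M_{i-1})_{\ell_i+1, q_i}$; by construction all $M_i$ are row-wise permutations of $M_D$, so each induces a coherent matching field by the observation preceding the definition. Fix a $k$-subset $J = \{j_1 < \dots < j_k\} \subseteq [n]$ and consider the tuple $\tau = (j_{\sigma(1)}, \dots, j_{\sigma(k)})$ of $B(k,n)_{i-1}$, i.e.\ the permutation $\sigma = \sigma_J$ minimising $\sum_{t=1}^{k} (M_{i-1})_{t, j_{\sigma(t)}}$. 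I would split into cases according to whether $\{p_i, q_i\} \subseteq J$, and, if so, in which positions of $\tau$ the values $p_i$ and $q_i$ appear.

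First I would observe that the swap only touches row $\ell_i + 1$ of the weight matrix, and only the two columns $p_i$ and $q_i$. Hence for any permutation $\sigma$ whose tuple does \emph{not} place one of $p_i, q_i$ in position $\ell_i+1$, the value $\sum_t (M_i)_{t, j_{\sigma(t)}}$ is unchanged from the $M_{i-1}$ value. A short bookkeeping argument using the ``staircase'' property of $M_D$ (differences in row $t$ dominated by differences in row $t+1$) shows that the minimising permutation for a given $J$ always arranges the columns of $J$ into a specific shape: the optimum is insensitive to swapping two entries of row $\ell_i+1$ unless those two entries are precisely the ones competing for position $\ell_i+1$, which happens exactly when $p_i$ and $q_i$ are the two candidate column-indices $j_{\ell_i}, j_{\ell_i+1}$ in increasing order — equivalently, when $\tau$ has $j_{\ell_i} = p_i$ in position $\ell_i$ and $j_{\ell_i+1} = q_i$ in position $\ell_i+1$ (using $p_i < q_i$, which holds along the sequence $S(k,n)$). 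In that single case, swapping the two row-$(\ell_i+1)$ entries makes it strictly cheaper to send $q_i$ to position $\ell_i+1$ and $p_i$ to position $\ell_i$... wait, one must be careful about which of the two positions $\ell_i$ and $\ell_i+1$ sits in row $\ell_i+1$ of the \emph{tuple} versus the matrix. I would track this explicitly: the tuple entry in slot $\ell_i+1$ is weighted by row $\ell_i+1$ of the matrix, so exchanging $(M_{i-1})_{\ell_i+1,p_i}$ and $(M_{i-1})_{\ell_i+1,q_i}$ flips the preference between the orderings $(\dots, p_i, q_i, \dots)$ and $(\dots, q_i, p_i, \dots)$ for those two slots, and leaves every other subset's tuple untouched because the minimum is attained uniquely and the change in objective value is $0$ for every competing permutation.

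I would then verify that the condition ``$j_\ell = p_i$ and $j_{\ell+1} = q_i$'' in the statement is exactly the condition isolated above, and that for the subsets satisfying it the new minimiser is the old tuple with entries in positions $\ell$ and $\ell+1$ transposed. Combining the two cases yields precisely the two-part description of $B(k,n)_i$ in the proposition. To be fully rigorous I would also confirm that the swap keeps the matrix generic (uniqueness of minimisers), which again follows from the staircase structure: row $\ell_i+1$ still has pairwise differences strictly between those of rows $\ell_i$ and $\ell_i+2$, so no ties are created.

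The main obstacle I anticipate is the careful verification that no \emph{other} subset's tuple changes — i.e.\ ruling out that the swap in row $\ell_i+1$ could flip the optimal placement of $p_i$ or $q_i$ relative to a third column index, or relative to a position other than $\{\ell_i, \ell_i+1\}$. Handling this cleanly requires a precise description of the minimising permutation $\sigma_J$ for $M_{i-1}$ (and inductively for all earlier matrices), showing it always sorts ``most'' columns of $J$ into increasing order with only controlled local deviations at the columns in $\{k+1, \dots, n\}$ that have been swapped so far. This is the technical heart; once that normal form for $\sigma_J$ is in hand, the effect of one further swap is an elementary comparison of two sums differing in a single row.
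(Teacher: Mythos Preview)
Your approach is essentially the one taken in the paper: analyse the effect of the single swap in row $\ell_i+1$ using the hierarchical size of row-differences inherited from $M_D$. The paper organises this into two targeted claims rather than deriving a full normal form for $\sigma_J$. Claim~1 shows that for any row-wise permutation of $M_D$, the entry in position $s$ of the optimal tuple realises $\min\{M_{1,j_1},\dots,M_{s,j_s}\}$, which immediately pins down positions $\ell_i+2,\dots,k$ of the tuple since only row $\ell_i+1$ changed. Claim~2 shows that the two swapped entries $(M_{i-1})_{\ell_i+1,p_i}$ and $(M_{i-1})_{\ell_i+1,q_i}$ are always \emph{adjacent in value} within row $\ell_i+1$ (their difference is exactly $n^{\ell_i-1}$), so the swap can only flip the relative placement of $p_i$ and $q_i$ between positions $\ell_i$ and $\ell_i+1$ and cannot affect any other column's position. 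This adjacency fact is precisely the ``main obstacle'' you flagged, but you do not need the full normal form of $\sigma_J$ to resolve it: tracking the cumulative effect of earlier swaps on row $\ell_i+1$ alone suffices, and the paper in fact derives the explicit normal form (Lemma~\ref{lemma gr fflv vertices}) as a \emph{consequence} of this proposition rather than as an input to it.
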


\begin{proof}
Let $M$ be any matrix whose entries are row-wise equal to $M_D$. Let $J  = \{j_1 < \dots < j_k\} \subseteq [n]$ be a $k$-subset and consider the Pl\"ucker form $P_J \in \mathbb{C}[x_{s,t}]_{s \in [k], t \in [n]}$. Suppose that the initial term of $P_J$ with respect to $M$ is given by $c\prod_{s = 1}^k x_{s, j_s}$ for some $c \in \{+1, -1\}$. 

\medskip \noindent {\bf Claim 1.}
\emph{For each $s \in [k]$, we have that $M_{s,j_s} = \min \{M_{1, j_1}, M_{2, j_2}, \dots, M_{s, j_{s}}\}$.}

\begin{proof}
We proceed by induction on $k$. If $k = 1$ then the claim holds trivially. For the inductive step, it suffices to show the claim for $s = k$. Suppose that $M_{s', j_{s'}} =  \min\{M_{1, j_1}, M_{2, j_2}, \dots, M_{k, j_{k}}\}$ for some $s' \neq k$. By the definition of $M_D$, we have $M_{k, j_k} - M_{s', j_{s'}} \ge n^{k - 2}$. Let $\{j_1', \dots, j_{k-1}'\} = \{j_1, \dots, j_k \} \backslash j_{s'}$. By the definition of $M_D$, we have that $M_{t, j_t'} \le (n-1)n^{t-2}$ for each $t \in \{2, \dots, k-1\}$ and $M_{1,j_1'} = 0$. We have that $c'x_{s',j_{s'}} \prod_{t = 1}^{k-1}x_{t, j_t'}$ is a term of $P_J$ for some $c' \in \{+1, -1\}$. The weight of this term with respect to $M$ is given by
\[
M_{s', j_{s'}} + \sum_{t = 1}^{k-1} M_{t, j_t'} \le M_{s, j_s} + \sum_{t = 2}^{k-1} (n-1)n^{t-2} = M_{s, j_s} + n^{k-2} - (n-1) < M_{k, j_k}.
\]
Therefore, the initial term of $P_J$ is not $c\prod_{s = 1}^k x_{s, j_s}$, a contradiction, and we have shown the claim.
\end{proof}

Let $(j_1, \dots, j_k) \in B(k,n)_{i-1}$ be a tuple. Note that the only difference between the matrices $M(k,n)_{i-1}$ and $M(k,n)_i$ is in row $\ell_i + 1$. So, by Claim~1, we have that the tuples corresponding to $(j_1, \dots, j_k)$ in $B(k,n)_i$ and in $B(k,n)_{i-1}$ coincide from position $\ell_i+2$ to position $k$. 

\medskip \noindent {\bf Claim 2.}
\emph{We have that $(M(k,n)_{i-1})_{\ell_i + 1, p_i} 
- (M(k,n)_{i-1})_{\ell_i + 1, q_i} 
= n^{\ell_i - 1}
$, in particular this difference is as small as possible in row $\ell_i + 1$ of $M(k,n)_{i-1}$.}

\begin{proof}
We imagine that the matrix $M(k,n)_{i-1}$ is obtained from $M_D$ by a sequence of swaps among its entries. Note that it suffices to consider only the swaps that occur in row $\ell_i +1$. Each such swap is associated to an element of $S(k,n)_j$ where $\ell_j = \ell_i$ and the subsequence of such elements is given by
\[
(k, \ell_i, k+1), (k, \ell_i, k+2), \dots, (k, \ell_i, n), (k-1, \ell_i, k+1), \dots, (k-1, \ell_i, n), \dots, (\ell_i+1, \ell_i, n).
\]
For ease of notation, we write down the entries of row $\ell_i+1$ of the weight matrices divided by $n^{\ell_i-1}$, so for $M_D$ this gives $(n-1, n-2, \dots, 1, 0)$. Consider the effect of applying the swaps defined by $(k, \ell_i, k+1), \dots, (k, \ell_i, n)$ to $M_D$. Each swap interchanges the entry in column $k$ with the next smallest entry in that row. As a result, the second row of $M(k,n)_j$ where $S(k,n)_j = (k,\ell_i, n)$ is given by $(n-1, n-2, \dots, n-k+1, 0, n-k, n-k-1, \dots, 2,1)$. Similarly, for each $s \in \{ 1, \dots, k-\ell_i\}$, the second row of $M(k,n)_j$ where $S(k,n)_j = (k-s+1, \ell_i, n)$ is given by $(n-1, n-2, \dots, n-k+s, s-1, s-2, \dots, 0, n-k+s-1, n-k+s-2, \dots, s)$, and each swap interchanges an element with the next smallest entry in the same row. This concludes the proof of the claim.
\end{proof}

By Claim~2, we have that $(j_1, \dots, j_k)$ is a tuple of $B(k,n)_i$ if either $j_{\ell_i+1} \neq p_i$ or $j_{\ell_i+1} \neq q_i$. On the other hand, if $j_{\ell_i} = p_i$ and $j_{\ell_i+1} = q_i$ then we have that the corresponding entries of $M(k,n)_i$ are swapped. And so the corresponding tuple is obtained by swapping the position of $p_i$ and $q_i$ in the tuple. This concludes the proof of the lemma. 
\end{proof}

\noindent 
\textbf{Notation.} Whenever $k$ and $n$ are fixed, we omit them from the notation of $S(k,n)$, $M(k,n)$, $B(k,n)$ and $P(k,n)$ and write $S$, $M$, $B$ and $P$, respectively.

\medskip

We give an explicit description of the tuples that define $B_i$ as follows.

\begin{lemma}\label{lemma gr fflv vertices}
Let $J = \{j_1 < \dots < j_k \} \subseteq [n]$ be a $k$-subset. Construct the tuple $T(i)_J = (t_1, \dots, t_k)$ whose entries are the elements $J$ satisfying the following conditions:
\begin{itemize}
    \item If $j \in J \cap \{p_i+1, \dots, k \}$ then $t_j = j$,
    \item If $j_s = p_i \in J$ for some $s \le \ell_i$ and $j_{s+d} \le q_i$ where $d \ge 1$ is the smallest value such that $j_{s+d} \ge k+1$ then $t_{\ell_i + 1} = j_s$,
    \item If $j_s = p_i \in J$ for some $s \le \ell_i$ and $j_{s+1} > q_i$ where $d \ge 1$ is the smallest value such that $j_{s+d} \ge k+1$ then $t_{\ell_i} = j_s$,
    \item The entries of $T(i)_J$ not listed above are in ascending order.
\end{itemize}
Then the tuples of $B_i$ are exactly $T(i) := \left\{T(i)_J \colon J \in \binom{[n]}{k} \right\}$.
\end{lemma}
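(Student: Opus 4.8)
I would prove the lemma by induction on $i$, the engine being Proposition~\ref{prop gr fflv matching field inductive description}, which says that $B_i$ is obtained from $B_{i-1}$ by transposing $p_i$ and $q_i$ in exactly those tuples in which they occupy the consecutive slots $\ell_i$ and $\ell_i+1$. The base case is the diagonal matching field $\MD = B_0$, whose tuples are the ascending tuples $(j_1,\dots,j_k)$; this matches the recipe $T(\cdot)_J$ at the initial triple, for which the first rule places nothing (the set $J\cap\{p+1,\dots,k\}$ is empty) and hence every entry of $J$ is placed in ascending order. It is also worth checking once and for all that the recipe is unambiguous --- that for each $J$ the four rules place each element of $J$ in exactly one of the $k$ slots --- which follows by comparing the named slots to the ``free'' slots and the named elements to the ``free'' elements.

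For the inductive step I would split by which of the three rules produces $S_i$ from $S_{i-1}$: (i) $q_i=q_{i-1}+1$ with $p_i,\ell_i$ unchanged; (ii) $q_{i-1}=n$ and $(p_i,\ell_i,q_i)=(p_{i-1},\ell_{i-1}+1,k+1)$; (iii) $q_{i-1}=n$, $\ell_{i-1}=p_{i-1}-1$ and $(p_i,\ell_i,q_i)=(p_{i-1}-1,1,k+1)$. In each case, using the description $T(i-1)_J$ of the tuples of $B_{i-1}$ provided by the induction hypothesis, I would first identify precisely which subsets $J$ have $p_i$ in slot $\ell_i$ and $q_i$ in slot $\ell_i+1$. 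Because the ``unlisted'' entries --- the small entries of $J$ below $p_i$ together with the large entries of $J$ --- always fill the remaining slots in increasing order, and the slots strictly below $\ell_i$ are always among these free slots, one can read this off: it is governed by whether $p_i$ lies in $J$ at a low enough rank, and by whether $q_i$ equals exactly the entry the increasing fill assigns to slot $\ell_i+1$. Transposing on those $J$ and leaving the others untouched, I would then check slot by slot that the outcome agrees with $T(i)_J$ for the new triple: the only slots that can change are the one occupied by $p_i$ (moving to $\ell_i+1$) and the one freed up at $\ell_i$, and both changes are what the updated inequalities involving $q_i$, $\ell_i$, $p_i$ in the lemma dictate.

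The delicate step, and the one I expect to be the main obstacle, is case (iii) --- the ``phase transition'' $p\mapsto p-1$, $\ell\mapsto 1$, $q\mapsto k+1$. Here no transposition occurs, so one must verify $T(i-1)_J=T(i)_J$ for every $J$ even though the two recipes use different $p$ and $\ell$. The real content is that the element $p_{i-1}=p_i+1$, having completed its whole $p$-phase (processed through $\ell_{i-1}=p_{i-1}-1$ and $q_{i-1}=n$), already sits in its natural slot $p_{i-1}$ whenever $p_{i-1}\in J$, so that it simply joins the naturally-placed set $J\cap\{p_i+1,\dots,k\}$ of $T(i)_J$ without moving anything else. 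When $p_{i-1}$ occurs in $J$ at rank at most $\ell_{i-1}=p_{i-1}-1$, this is read off from $T(i-1)_J$, the inequality against $q_{i-1}=n$ being automatic; the only remaining case is rank $p_{i-1}$, i.e.\ $\{1,\dots,p_{i-1}\}\subseteq J$, and then the increasing fill of the unlisted entries $\{1,\dots,p_{i-1}\}$ into the unlisted slots $\{1,\dots,p_{i-1}\}$ visibly puts $p_{i-1}$ in slot $p_{i-1}$. Granting this, cases (ii) and (iii) reduce to the same slot-counting as (i), now with the boundary values $\ell_i=\ell_{i-1}+1$ (resp.\ $\ell_i=1$) and $q_i=k+1$. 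An alternative would be to avoid the proposition altogether and argue directly from the weight matrix $M_i$ --- using the structure of its rows extracted in the proof of Proposition~\ref{prop gr fflv matching field inductive description}, together with the fact that the minimising permutation for any row-wise permutation of $M_D$ is built greedily from the bottom row up --- but writing the rows of $M_i$ down explicitly for arbitrary $i$ is a computation of comparable length, so routing the argument through the proposition seems preferable.
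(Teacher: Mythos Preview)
Your overall architecture is the same as the paper's: induction on $i$ via Proposition~\ref{prop gr fflv matching field inductive description}, split into the three cases coming from the recursion for $S(k,n)$. Two points need correction.

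First, a minor one: the lemma is stated only for $i\in\{1,\dots,\#S\}$, and indeed the recipe $T(0)_J$ with $S_0=(k+1,k,n)$ is not well-posed --- when $k+1\in J$ and $k+1$ is not the largest element of $J$, bullet~2 tries to assign $t_{\ell_0+1}=t_{k+1}$, which is out of range. The paper sidesteps this by taking the base case at $i=1$ and checking it directly against Proposition~\ref{prop gr fflv matching field inductive description}.

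Second, and this is the substantive gap: your claim in case (iii) that ``no transposition occurs, so one must verify $T(i-1)_J=T(i)_J$ for every $J$'' is false. With $S_i=(p_i,\ell_i,q_i)=(p_{i-1}-1,\,1,\,k+1)$, Proposition~\ref{prop gr fflv matching field inductive description} swaps $p_i$ and $k+1$ whenever they occupy slots $1$ and $2$ of a tuple of $B_{i-1}$, and this \emph{does} happen: take any $J$ with $j_1=p_i$ and with the first element of $J$ exceeding $k$ equal to $k+1$ (for example $J=\{p_i,k+1,k+2,\dots\}$). In $T(i-1)_J$ one has $p_i$ in slot $1$ and $k+1$ in slot $2$, whereas in $T(i)_J$ bullet~2 puts $p_i$ in slot $2$; so $T(i-1)_J\neq T(i)_J$. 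Your observation that $p_{i-1}$ already sits in slot $p_{i-1}$ is correct and is exactly what shows the \emph{anchor-set change} $\{p_{i-1}+1,\dots,k\}\to\{p_i+1,\dots,k\}$ is harmless, but that is only half of case (iii): after absorbing $p_{i-1}$ into the anchor set one must still carry out the swap analysis for $p_i$ and $k+1$ in slots $1,2$. Your final sentence (``reduce to the same slot-counting as (i)'') gestures at this, but it contradicts the claim two sentences earlier; as written the argument asserts something false and does not actually perform the swap check. The paper's Case~3 does precisely this: it identifies the $J$ with $T(i)_J\neq T(i+1)_J$ as those with $j_1=p_i-1$ and $j_d=k+1$, and verifies these are exactly the tuples on which the Proposition~\ref{prop gr fflv matching field inductive description} transposition acts.
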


\begin{proof}
For ease of notation, we use the term \textit{conditions} to refer to the conditions satisfied by the tuple in the statement of the lemma. The proof follows by induction on $i \in \{1, \dots, \#S\}$. Suppose $i = 1$ and fix a $k$-subset $J \subseteq[n]$. We have $S_i = (k, 1, k+1)$ so the set $\{p_i+1, \dots, k \}$ is empty and there are no elements of $T(i)_J$ determined by the first condition. Consider the second and the third conditions. If we have $j_s = p_i$ and $s \le \ell_i$ then it follows that $j_1 = k$. If $j_2 \le q_i$ then $j_2 = k+1$ and so $t_2 = j_1 = k$. In this case, the remaining entries of $T(i)_J$ are listed in ascending order so we have $T(i)_J = (j_2, j_1, j_3, \dots, j_k) = (k+1, k, j_3, \dots, j_k)$. Otherwise if $j_2 > q_i$ then it follows that $j_2 > k+1$ and $T(i)_J = (j_1, j_2, \dots, j_k)$ is given in ascending order. By Proposition~\ref{prop gr fflv matching field inductive description}, we have that $T(1)$ coincides with the tuples of $B_1$.

For the inductive step, fix some $i \ge 1$ and assume that the tuples of $B_i$ coincide with $T(i)$. By Proposition~\ref{prop gr fflv matching field inductive description}, the matching field $B_{i+1}$ is obtained by swapping the positions of $p_{i+1}$ and $q_{i+1}$ within all tuples of $B_i$ for which $p_{i+1}$ appears in position $\ell_{i+1}$ and $q_{i+1}$ in position $\ell_{i+1}+1$. Let $J = \{j_1 < \dots < j_k \}$ be a $k$-subset of $[n]$. Suppose that $T(i)_J \neq T(i+1)_J$. We proceed by taking cases on $p_i$, $\ell_i$ and $q_i$ which determine $S_{i+1}$.

\smallskip

\noindent {\bf Case 1.}
Assume that $q_i < n$ and so $S_{i+1} = (p_i, \ell_i, q_i + 1)$. It follows that $j_s = p_i$ for some $s \le \ell_i$. Let $d \ge 1$ be the smallest value such that $j_{s+d} \notin \{ p_i+1, \dots, k\}$. Since $T(i)_J \neq T(i+1)_J$, it follows that $j_{s+r} = q_i + 1$. By the inductive hypothesis, we have that $T(i)_J$ is a tuple of $B_i$. Within $T(i)_J$, we have that $p_i$ appears in position $\ell_i$. By an easy inductive argument using the definition of the matching fields $B_a$ where $1 \le a \le i$, it follows that $j_{s+r}$ appears in position  $\ell_i+1$ in $T(i)_J$. In the tuple $T(i+1)_J$ we have that the positions of $p_i$ and $q_i+1$ are swapped, which concludes this case.

\smallskip

\noindent {\bf Case 2.}
Assume that $q_i = n$ and $\ell_i < p_i - 1$ and so $S_{i+1} = (p_i, \ell_i + 1, k+1)$. It follows that $j_s = p_i$ for some $s \le \ell_i$. Let $d \ge 1$ be the smallest value such that $j_{s+d} \notin \{p_i+1, \dots, k \}$. Since $T(i)_J \neq T(i+1)_J$, it follows that $j_{s+d} = k+1$. By a simple inductive argument, it is easy to see that the position of $p_i$ in $T(i)_J$ is $\ell_i+1$. Therefore, the position of $k+1$ in $T(i)_J$ is $\ell_i + 2$. On the other hand, the position of $p_i$ in $T(i+1)_J$ is $\ell_i+2$ and the position of $k+1$ is $\ell_i+1$. Since these are the only entries of the tuples which are different, we are done with this case.

\smallskip

\noindent {\bf Case 3.}
Assume that $q_i = n$ and $\ell_i = p_i - 1$ and so $S_{i+1} = (p_i - 1, 1, k+1)$ where $p_i > 2$. It follows that $j_1 = p_i - 1$. Let $d \ge 1$ be the smallest value such that $j_d \notin \{p_i, \dots, k\}$. Since $T(i)_J \neq T(i+1)_J$ we have that $j_d = k+1$. We see that $p_i-1$ appears in position $1$ in $T(i)_J$ and, by a straightforward inductive argument, that $j_d$ appears in position $2$. In $T(i+1)_J$, we have that $p_i - 1$ appears in position $2$ and $j_d$ appears in position $1$. Since these are the only entries of the tuples which are different, we are done with this case. 

\smallskip

In each case we have shown that the tuples in $T(i)$ which do not appear in $T(i+1)$ are exactly those which contain $p_i$ in position $\ell_i$ and $q_i$ in position $\ell_i+1$. We have also shown that swapping these two entries gives a tuple in $T(i+1)$ and so we are done.
\end{proof}

Let $s = \#S-1$ be the index of the last entry in the sequence $S$. In the following result, we show that the matching fields $\MB_{k,n}$ (see Definition~\ref{def: fflv weight vector and matching field}) and $B_s$, induced by the weight matrices $M^{\rm FFLV}$ and $M_s$ respectively, are the same. 

\begin{proposition}\label{prop same fflv matching field for different weights}
Let $s = \#S-1$. The matching fields induced by the weight matrices $M^{\rm FFLV}$ and $M_s$ are equal. In particular, their induced weight vectors lie in the same top-dimensional cone of the tropical Grassmannian.
\end{proposition}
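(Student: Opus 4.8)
The plan is to show that the matching fields $\MB_{k,n}$ and $B_s$ agree by comparing the explicit combinatorial description of their tuples. For $B_s$, we have the inductive description in Lemma~\ref{lemma gr fflv vertices}, but more usefully we can read off the description of its tuples directly from the shape of the sequence $S$: the sequence $S(k,n)$ terminates at $(2,1,n)$, having performed, for each value $p$ running from $k+1$ down to $2$ and each admissible $\ell$, all the swaps that move the entry $p$ leftward in a tuple as far as position $\ell_i$ permits. So the first step is to extract from Lemma~\ref{lemma gr fflv vertices}, applied with $i = s$, the following closed form for the tuples of $B_s$: given $J = \{j_1 < \dots < j_k\}$, each element of $J \cap [k]$, say the value $m$, is placed in position $m$ of the tuple, and the remaining elements of $J$ (those exceeding $k$) are placed in the remaining positions in increasing order. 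This is exactly the description of the tuples of $\MB_{k,n}$ given at the end of Example~\ref{ex: FFLV matching field in gr37 case} (``put each $i$ with $1\le i\le k$ in the $i$-th position and arrange the remaining entries in increasing order''), which in turn follows from reading off the initial term of $P_J$ with respect to $M^{\rm FFLV} = {\rm Diag} - D$: the $-N$ on the diagonal forces column $m$ into row $m$ whenever $m \in J$, and among the off-diagonal columns the ${\rm Diag}$ part (whose rows are strictly increasing in coarseness, as noted for $M_D$) forces the increasing arrangement.

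Carrying this out, I would proceed in two halves. First, verify the description of $B_s$: either by a direct induction on $i$ tracking which values have been ``pulled left'' after processing all triples $(p,\ell,q)$ with $p > p_i$ together with those $(p_i,\ell',q')$ with $\ell' < \ell_i$ — the upshot being that after all of $S$ is processed, a value $m \in [k] \cap J$ sitting originally in position $s$ of the sorted tuple has been moved to position $m$, displacing the larger entries to its right — or, more cleanly, by invoking Lemma~\ref{lemma gr fflv vertices} with $i = s$ and checking that for the terminal triple $(p_s,\ell_s,q_s) = (2,1,n)$ the four bullet conditions collapse to exactly the ``diagonal placement + increasing fill'' rule (here $\{p_s+1,\dots,k\} = \{3,\dots,k\}$ handles values $\ge 3$, and the second/third bullets handle the value $2$). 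Second, verify that $M^{\rm FFLV}$ induces precisely this matching field: for each $J$, show $\widehat{M^{\rm FFLV}}(J)$ is attained by the unique permutation implementing ``diagonal placement + increasing fill.'' The key inequality is that, with $N = n^3$, the diagonal savings $-N$ dominate every ${\rm Diag}$-entry (all of which are $O(n^2)$), so any permutation not sending $m \mapsto$ (row $m$) for $m \in J \cap [k]$ is strictly worse; and once those positions are fixed, among the remaining rows the ${\rm Diag}$ entries behave exactly like $M_D$ restricted to the complementary rows and columns, whose unique minimizer is the increasing arrangement (this is the content already used in the proof of Theorem~\ref{thm: Gr poset order polytope equals diagonal MF polytope} and Claim~1 inside Proposition~\ref{prop gr fflv matching field inductive description}). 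This also confirms well-definedness, i.e. that $N = n^3$ is ``sufficiently large.''

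The final sentence of the statement — that the induced weight vectors $\widehat{M^{\rm FFLV}}$ and $\widehat{M_s}$ lie in the same top-dimensional (maximal) cone of the tropical Grassmannian — is then essentially immediate: a generic weight matrix $M$ induces a coherent matching field $\Lambda_M$, and $\init_{\widehat M}(\det(X_I)) = \pm \prod_i x_{i,\Lambda_M(I)(i)}$ depends only on $\Lambda_M$; hence two generic weight matrices inducing the same matching field give the same initial monomial ideal $\langle \init_{\widehat M}(\det X_I) \rangle$, and since these initial forms determine which facets of each Gr\"obner/tropical cone the weight vector lies in the interior of, the two weight vectors lie in the interior of the same maximal cone. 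I would phrase this as: the Gr\"obner cone of the monomial initial ideal generated by the $\init_M(\det X_I)$ is the same full-dimensional cone for both, and it is a cone of the tropical Grassmannian because the matching field gives a toric degeneration (or simply because it is a maximal cone of $\operatorname{trop}(\Gr(k,n))$ on which the Pl\"ucker forms have monomial initial forms).

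I expect the main obstacle to be the bookkeeping in the first half — pinning down exactly what $B_s$ looks like. The subtlety is that a value $m \in [k]$ that belongs to $J$ must end up in position $m$, but the relative order in which the swaps in $S$ process the pairs $(p,q)$ matters for tracking intermediate tuples, and one must be careful that when $m$ moves left it ``leapfrogs'' only the other small values $m+1,\dots$ that also belong to $J$ (pushing them one position right each) and not the large values, which stay sorted among themselves. Choosing to deduce the description of $B_s$ from Lemma~\ref{lemma gr fflv vertices} rather than re-deriving it from scratch sidesteps most of this, reducing the first half to checking that the terminal-triple conditions of that lemma specialize correctly, which is routine.
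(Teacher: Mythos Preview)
Your approach is correct and is essentially the same as the paper's: invoke Lemma~\ref{lemma gr fflv vertices} at the terminal index to read off that the tuples of $B_s$ are given by ``diagonal placement of $J\cap[k]$ plus increasing fill,'' and observe this matches the tuples of $\MB_{k,n}$ induced by $M^{\rm FFLV}$. The paper's proof is much terser---it simply cites the lemma and asserts the match---whereas you spell out why the terminal conditions $(p_s,\ell_s,q_s)=(2,1,n)$ specialize correctly and why the $-N$ diagonal dominates in $M^{\rm FFLV}$; you also address the tropical-cone sentence, which the paper's proof does not justify at all.
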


\begin{proof}
The explicit description of the matching field associated to $M_s$ is given in Lemma~\ref{lemma gr fflv vertices}. In particular, for any subset $J = \{j_1 < \dots < j_k \}$, the corresponding tuple is given by $T(s)_J = (t_1, \dots, t_k)$ where $t_j = j$ for each $j \in \{1, \dots, k\}$ and the remaining elements of $T(s)_J$ are in ascending order. This tuple is identical to the tuple induced by the weight matrix $M^{\rm FFLV }$. 
\end{proof}

\subsection{Mutation equivalence of FFLV and GT matching field polytopes}\label{sec: fflv and gt mutation equiv}

In this section, we show that the FFLV and GT matching field polytopes are mutation equivalent. 

\begin{theorem}\label{thm:main_theorem}
The GT-polytope and the FFLV-polytope for $\Gr(k,n)$ are connected by a sequence of combinatorial mutations.
\end{theorem}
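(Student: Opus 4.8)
The plan is to exhibit the sequence of matching field polytopes $P_0 = P(k,n)_0, P_1, \dots, P_s$ (with $s = \#S - 1$) constructed in Section~\ref{sec: intermediate matching field polytopes} as a chain of combinatorial mutations, where $P_0 = P_\MD$ is the GT-polytope by Theorem~\ref{thm: Gr poset order polytope equals diagonal MF polytope} and $P_s = P_{\MB_{k,n}}$ is the FFLV-polytope by Theorem~\ref{thm: gr chain poset equals fflv polytope} together with Proposition~\ref{prop same fflv matching field for different weights}. So it suffices to show that for each $i \in \{1, \dots, s\}$ the polytope $P_i$ is a combinatorial mutation of $P_{i-1}$. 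The key input is the explicit combinatorial description of how the vertex sets of consecutive polytopes differ: by Proposition~\ref{prop gr fflv matching field inductive description}, passing from $B_{i-1}$ to $B_i$ replaces each tuple in which $p_i$ occupies position $\ell_i$ and $q_i$ occupies position $\ell_i + 1$ by the tuple with these two entries swapped, and leaves all other tuples fixed. On the level of vertices this means replacing $v = \dots + e_{\ell_i, p_i} + e_{\ell_i+1, q_i} + \dots$ by $v' = \dots + e_{\ell_i, q_i} + e_{\ell_i+1, p_i} + \dots$, i.e.\ adding the fixed vector $u := (e_{\ell_i, q_i} - e_{\ell_i, p_i}) - (e_{\ell_i+1, q_i} - e_{\ell_i+1, p_i})$ to exactly those vertices.

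Next I would produce the mutation data $(w, F)$ witnessing $P_i = \varphi_{w,F}(P_{i-1})$. The natural guess, following the pattern in \cite{clarke2021combinatorial}, is to take $w$ to be (a primitive multiple of) the difference $e_{\ell_i, q_i} - e_{\ell_i, p_i}$ — equivalently $w = e_{\ell_i,q_i} - e_{\ell_i,p_i}$ itself, which is already primitive — and to take the factor $F \subseteq w^\perp$ to be a segment $\conv\{0, f\}$ with $f = e_{\ell_i+1,p_i} - e_{\ell_i+1,q_i}$ (which indeed pairs to zero with $w$ since their supports are in different rows). Then on the region $\langle x, f\rangle = 1$ the tropical map acts by $x \mapsto x - w$, and on the region $\langle x, f\rangle = 0$ it is the identity. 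I would then verify two things: (i) every vertex $v$ of $P_{i-1}$ that must be moved satisfies $\langle v, f\rangle = 1$, while every vertex that must be fixed satisfies $\langle v, f\rangle = 0$ — this is exactly the statement that the entry $p_i$ sits in row $\ell_i+1$ precisely when the swap does not apply, which follows from the criteria in Lemma~\ref{lemma gr fflv vertices} and the structure of the tuples of $B_{i-1}$; and (ii) the image $\varphi_{w,F}(P_{i-1})$ is a lattice polytope, which is automatic once we check it equals $\conv$ of the $0/1$ vertex set of $P_i$ — the image of each vertex is a vertex of $P_i$ by the computation above, and conversely every vertex of $P_i$ arises this way, so the convex hulls agree. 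Because $\varphi_{w,F}$ maps a lattice polytope with the right vertex set onto a lattice polytope, $P_i$ is a combinatorial mutation of $P_{i-1}$ by Definition~\ref{def: tropical map and mutation}.

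Finally, concatenating the $s$ mutations gives a sequence of combinatorial mutations from $P_\MD = \MO(Q_{k,n})$ (the GT-polytope) to $P_{\MB_{k,n}} \cong \MC(Q_{k,n})$ (the FFLV-polytope), and since every $P_i$ is by construction a coherent matching field polytope, all intermediate polytopes are matching field polytopes, which is the refined statement in the introductory Theorem. The main obstacle I anticipate is step (i): one must check carefully, using the three-case inductive description of the triples $(p_i, \ell_i, q_i)$ and the tuple criteria of Lemma~\ref{lemma gr fflv vertices}, that the hyperplane $\langle \cdot, f\rangle = 1$ separates exactly the moved vertices from the fixed ones — in particular that no vertex of $P_{i-1}$ has $\langle v, f\rangle$ outside $\{0,1\}$ (which would already fail to be a mutation) and that the entry $q_i$ never sits in row $\ell_i$ of a tuple of $B_{i-1}$ except in the configurations we intend to swap. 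This is where the precise bookkeeping of which entries can occupy which positions at stage $i$ — controlled by Claims~1 and 2 in the proof of Proposition~\ref{prop gr fflv matching field inductive description} — is essential, and it is also the place where one must confirm that the two rows $\ell_i$ and $\ell_i+1$ interact with the remaining (unpermuted) rows in a way compatible with the mutation being a single shear rather than a composition.
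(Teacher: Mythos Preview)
There are two genuine gaps. First, your mutation data is incorrect. The vector $w$ must be the full swap vector $u = (e_{\ell_i,q_i}-e_{\ell_i,p_i})+(e_{\ell_i+1,p_i}-e_{\ell_i+1,q_i})$ that you correctly identified, not just its row-$\ell_i$ half: with your $w=e_{\ell_i,q_i}-e_{\ell_i,p_i}$, translating a vertex that has $p_i$ in row $\ell_i$ and $q_i$ in row $\ell_i+1$ by $\pm w$ leaves row $\ell_i+1$ untouched, so the image still has $q_i$ in that row and is not a vertex of $P_i$. (You also have the action backwards: with $F=\conv\{0,f\}$ one has $x_{\min}=\min(0,\langle x,f\rangle)$, so $\varphi_{w,F}$ is the identity where $\langle x,f\rangle\ge 0$ and shears by $+w$ where $\langle x,f\rangle=-1$.) The paper takes $w^i=u$ and a considerably more elaborate factor $f^i$, supported on columns $p_i,q_i,q_i+1,\dots,n$ in rows $\ell_i$ and $\ell_i+1$ (Definition~\ref{fflv gr tropical map}); with this choice the pairing $\langle f^i,v\rangle$ takes all three values $-1$, $0$, $+1$ on the vertices of $P_{i-1}$ (Lemma~\ref{lemma gr fflv inner products}), so the two-level separation you hope to verify in step~(i) does not occur.

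Second, and more fundamentally, your step~(ii) is invalid for a piecewise-linear map: knowing that $\varphi_{w,F}$ carries each vertex of $P_{i-1}$ to a vertex of $P_i$ does \emph{not} imply $\varphi_{w,F}(P_{i-1})=P_i$, because the image of a convex set under a tropical map need not be convex. A segment $[v_I,v_J]$ with $\langle f,v_I\rangle>0$ and $\langle f,v_J\rangle<0$ is sent to a broken line whose elbow may lie outside $P_i$. This is exactly the obstruction the paper removes with Lemma~\ref{lemma gr fflv no bad edges}: whenever $\langle f^i,v_I\rangle=1$ and $\langle f^i,v_J\rangle=-1$ there exist vertices $v_{I'},v_{J'}\in (f^i)^\perp$ with $v_I+v_J=v_{I'}+v_{J'}$, so every point of $P_{i-1}$ can be rewritten as a convex combination of vertices lying in a single closed half-space bounded by $(f^i)^\perp$, on which $\varphi^i$ is linear. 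Applying this exchange argument to both $\varphi^i$ and its inverse $(\varphi^i)^{-1}=\varphi_{-w^i,F}$ is the heart of the proof; it does not follow from the positional bookkeeping of Lemma~\ref{lemma gr fflv vertices} alone.
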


We recall from Section~\ref{sec: intermediate matching field polytopes} the sequence of triples $S = S(k,n)$, the weight matrices $M = M(k,n)$ and matching fields $B = B(k,n)$. For each $i \in \{1, \dots, \#S\}$, we have the triple $S_i = (p_i, q_i, \ell_i) \in [n]^3$, the weight matrix $M_i \in \RR^{k \times n}$ and the matching field $B_i$. To prove Theorem~\ref{thm:main_theorem}, we construct a combinatorial mutation which takes the polytope $P_{i-1}$ associated to $B_{i-1}$ to the polytope $P_{i}$ associated to $B_i$ for each $i \in \{1, \dots, \#S\}$. We begin by defining a tropical map.

\begin{definition}\label{fflv gr tropical map}
Fix $k < n$ and $i \in \{ 1, \dots, \#S\}$. For the following definitions we recall that $S_i = (p_i, \ell_i, q_i) \in [n]^3$ is a triple of natural numbers. We define $w^i \in \RR^{k \times n}$ to be the matrix given by
\[
w^i_{s,t} =
\begin{cases}
1 & \text{if } (s, t) \in \{(\ell_i, q_i), (\ell_i + 1, p_i) \}, \\
-1 & \text{if } (s, t) \in \{(\ell_i, p_i), (\ell_i + 1, q_i) \}, \\
0 & \text{otherwise.}
\end{cases}
\]
We define the matrix $f^i \in \RR^{k \times n}$ by
\[
f^i_{s,t} = 
\begin{cases}
-1 & \text{if } s = \ell_i \text{ and } t \in \{p_i, q_i, q_i+1, \dots, n\}, \\
 1 & \text{if } s = \ell_i + 1 \text{ and } t \ge q_i + 1,\\
 0 & \text{otherwise.}
\end{cases}
\]
It is convenient to view $w^i$ and $f^i$ as matrices 
\begin{gather*}
w^i = 
\bordermatrix{
         &              &p_i&             &q_i&             \cr
1        & 0\  \cdots\ 0 & 0 & 0\ \cdots\ 0 & 0 & 0\ \cdots\ 0 \cr
\vdots   & \ddots  & 0 & \ddots & 0 & \ddots \cr
\ell_i   & 0\  \cdots\ 0 &-1 & 0\ \cdots\ 0 & 1 & 0\ \cdots\ 0 \cr
\ell_i+1 & 0\  \cdots\ 0 & 1 & 0\ \cdots\ 0 &-1 & 0\ \cdots\ 0 \cr
\vdots   & \ddots  & 0 & \ddots & 0 & \ddots \cr
k        & 0\  \cdots\ 0 & 0 & 0\ \cdots\ 0 & 0 & 0\ \cdots\ 0
} 
\ \  \text{and} \ \ 
f^i = 
\bordermatrix{
         &              &p_i&             &q_i&             \cr
1        & 0\  \cdots\ 0 & 0 & 0\ \cdots\ 0 & 0 & 0\ \cdots\ 0 \cr
\vdots   & \ddots  & 0 & \ddots & 0 & \ddots \cr
\ell_i   & 0\  \cdots\ 0 &-1 & 0\ \cdots\ 0 &-1 &-1\ \cdots\ {-1} \cr
\ell_i+1 & 0\  \cdots\ 0 & 0 & 0\ \cdots\ 0 & 0 & 1\ \cdots\ 1 \cr
\vdots   & \ddots  & 0 & \ddots & 0 & \ddots \cr
k        & 0\  \cdots\ 0 & 0 & 0\ \cdots\ 0 & 0 & 0\ \cdots\ 0
}.
\end{gather*}

By the above depiction, it is clear that $f^i \in (w^i)^\perp$. So, we define the tropical map $\varphi^i := \varphi_{w^i, F}$ where $F = \conv\{\underline 0, f^i\}$.
\end{definition}

We show that the tropical map $\varphi^i$ defines a combinatorial mutation between the polytopes $P_{i-1}$ and $P_i$ for each $i \in \{1, \dots, \#S \}$. In the following sections, unless otherwise stated, we will fix $k < n$ and $i \in \{1, \dots, \#S \}$.

\begin{lemma}\label{lemma gr fflv inner products}
Let $J = \{j_1 < \dots < j_k \} \subseteq [n]$ be a $k$-subset. Let $j_s \in J$ denote the element that appears in position $\ell_i$ in the tuple corresponding to $J$ in $B_i$. Let $d \ge 1$ be the smallest value such that $j_{s + d} \ge k+1$. The inner product of $f^i$ with the vertex $v^{i-1}_J \in V(P_{i-1})$ is
\[
\langle f^i, v^{i-1}_J \rangle = 
\begin{cases}
-1 & \text{if } j_s = p_i \text{ and } j_{s + d} = q_i, \\
1 & \text{if } p_i < j_s < q_i < j_{s + d} \text{  or } j_s < p_i < q_i < j_{s + d},\\
0 & \text{otherwise.}
\end{cases}
\]
The inner product of $f^i$ with the vertex $v^i_J \in V(P_i)$ is
\[
\langle f^i, v^i_J \rangle = 
\begin{cases}
-1 & \text{if } j_s = q_i \text{ and } j_{s + d} = p_i, \\
1 & \text{if } p_i < j_s < q_i < j_{s + d} \text{  or } j_s < p_i < q_i < j_{s + d},\\
0 & \text{otherwise.}
\end{cases}
\]
\end{lemma}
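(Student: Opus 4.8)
The plan is to compute each inner product directly from the combinatorial description of the tuples of $B_{i-1}$ and $B_i$ given in Lemma~\ref{lemma gr fflv vertices}, together with the explicit form of $f^i$. Recall that $f^i$ has support only in rows $\ell_i$ and $\ell_i+1$: in row $\ell_i$ it has a $-1$ in columns $p_i, q_i, q_i+1, \dots, n$, and in row $\ell_i+1$ it has a $+1$ in columns $q_i+1, \dots, n$. Since the vertex $v_J$ of a matching field polytope is a $0/1$-vector with exactly one $1$ in each row $s$, placed in the column $t_s$ (the $s$-th entry of the tuple), the inner product $\langle f^i, v_J\rangle$ only sees the two entries $t_{\ell_i}$ and $t_{\ell_i+1}$ of the tuple. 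So the whole computation reduces to determining, for each $J$, what these two entries are in $B_{i-1}$ versus in $B_i$, and reading off the contribution.

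First I would set up the notation: let $j_s\in J$ be the element in position $\ell_i$ of the tuple for $J$ in $B_i$ and let $d\ge 1$ be minimal with $j_{s+d}\ge k+1$. Using the inductive description of the tuples (Lemma~\ref{lemma gr fflv vertices}, especially its second, third and fourth bullets), I would identify precisely which element of $J$ occupies position $\ell_i+1$. The key structural fact — which follows from the form of $S_i$ and an easy induction on $i$ as used inside the proof of Lemma~\ref{lemma gr fflv vertices} — is that in the tuple for $J$ in $B_i$, position $\ell_i$ is occupied by $j_s$ and position $\ell_i+1$ is occupied by $j_{s+d}$ precisely when $j_s\le p_i$ and the relevant entries are ``active'', while otherwise both positions carry their ascending-order values. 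Then the contribution to $\langle f^i, v^i_J\rangle$ is: $-1$ if $t_{\ell_i}\in\{p_i,q_i,\dots,n\}$ and $t_{\ell_i+1}\le q_i$ (this is exactly $j_s=q_i$, $j_{s+d}=p_i$, since $p_i<q_i$), then corrected by the row $\ell_i+1$ contribution $+1$ when $t_{\ell_i+1}\ge q_i+1$; working through the handful of cases $t_{\ell_i}<p_i$, $t_{\ell_i}=p_i$, $p_i<t_{\ell_i}<q_i$, $t_{\ell_i}\ge q_i$ against $t_{\ell_i+1}\le q_i$ versus $t_{\ell_i+1}>q_i$ gives the stated trichotomy. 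The computation for $v^{i-1}_J$ is identical except that, by Proposition~\ref{prop gr fflv matching field inductive description}, the tuple for $J$ in $B_{i-1}$ differs from that in $B_i$ only by swapping $p_i$ and $q_i$ when they sit in positions $\ell_i$ and $\ell_i+1$; hence the ``$-1$'' case becomes $j_s=p_i$, $j_{s+d}=q_i$ instead, and the ``$+1$'' and ``$0$'' cases are unchanged because those tuples are the same in $B_{i-1}$ and $B_i$.

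I expect the main obstacle to be bookkeeping rather than a conceptual difficulty: one has to be careful that the entry $j_{s+d}$ is genuinely the one in position $\ell_i+1$ (not some later position), which requires invoking the ``ascending-order'' clause of Lemma~\ref{lemma gr fflv vertices} and the fact that all elements of $J$ strictly between $p_i$ and the first element $\ge k+1$ are themselves $\le k$ and hence pinned to their own positions by the first clause. Once that is pinned down, the case analysis is a short finite check, and the two displayed formulas fall out. I would present the argument by first treating $v^i_J$ in full and then remarking that $v^{i-1}_J$ follows by the same argument after applying the swap of Proposition~\ref{prop gr fflv matching field inductive description}, keeping the write-up compact.
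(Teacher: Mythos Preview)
Your proposal is correct and follows essentially the same approach as the paper: use Lemma~\ref{lemma gr fflv vertices} to pin down that positions $\ell_i$ and $\ell_i+1$ of the tuple are occupied by $j_s$ and $j_{s+d}$, then read off the inner product from the explicit shape of $f^i$ by a short case analysis, and finally transfer between $P_{i-1}$ and $P_i$ via the swap of Proposition~\ref{prop gr fflv matching field inductive description}. The only cosmetic difference is the order---the paper treats $P_{i-1}$ first and then observes that the swap leaves the inner product unchanged (since $f^i_{\ell_i,p_i}+f^i_{\ell_i+1,q_i}=f^i_{\ell_i,q_i}+f^i_{\ell_i+1,p_i}=-1$), whereas you plan to do $P_i$ first; either direction works.
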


\begin{proof}
First consider the polytope $P_{i-1}$. By Lemma~\ref{lemma gr fflv vertices}, we have that $j_{s+d}$ appears in position $\ell_i+1$ in the tuple associated to $J$. Suppose that $j_s = p_i$. If $j_{s+d} = q_i$ then the inner product of $v^{i-1}_J$ with $f^i$ is $-1$. If $j_{s+d} \neq q_i$, then by Lemma~\ref{lemma gr fflv vertices}, it follows that $j_{s+d} > q_i$ and so the inner product of $v^{i-1}_J$ with $f^i$ is $0$. Now suppose that $j_s \neq p_i$. Since $j_s < j_{s+d}$, the inner products in the statement of the lemma follow immediately.

Next consider the polytope $P_i$. By definition, the tuples of $B_i$ are obtained from $B_{i-1}$ by swapping $p_i$ and $q_i$ if they appear in positions $\ell_i$ and $\ell_i+1$ respectively. However this does not affect the inner product of such tuples with $f^i$. And so the analogous result holds for $P_i$.
\end{proof}

\begin{lemma}\label{lemma gr fflv no bad edges}
Suppose that $v_I, v_J \in V(P_i)$ are vertices such that $\langle f^i, v_I \rangle = 1$ and $\langle f^i, v_J\rangle = -1$. Then there exist vertices $v_{I'}, v_{J'} \in V(P_i) \cap (f^i)^\perp$ such that $v_I + v_J = v_{I'} + v_{J'}$. The same also holds for $P_{i-1}$.
\end{lemma}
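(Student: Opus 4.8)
The plan is to take the two vertices $v_I$ and $v_J$, produce explicit subsets $I'$ and $J'$ from the data of $I$ and $J$, and verify two things: that $v_{I'}+v_{J'}=v_I+v_J$ as vectors in $\RR^{k\times n}$, and that the tuples associated to $I'$ and $J'$ in $B_i$ each pair to $0$ with $f^i$. Throughout I work with the polytope $P_i$; the argument for $P_{i-1}$ is identical after replacing Lemma~\ref{lemma gr fflv vertices} by its $(i-1)$-analogue (equivalently, by Lemma~\ref{lemma gr fflv inner products}, which handles both cases uniformly). First I would unpack what $\langle f^i, v_I\rangle = 1$ and $\langle f^i, v_J\rangle = -1$ mean via Lemma~\ref{lemma gr fflv inner products}: writing $I = \{i_1 < \dots < i_k\}$, the hypothesis $\langle f^i, v_I\rangle = 1$ says that the entry $i_s$ of $I$ sitting in position $\ell_i$ of the tuple $T(i)_I$ satisfies either $p_i < i_s < q_i < i_{s+d}$ or $i_s < p_i < q_i < i_{s+d}$, where $d$ is minimal with $i_{s+d}\geq k+1$; and $\langle f^i,v_J\rangle = -1$ says the corresponding entry $j_{s'}$ of $J$ equals $q_i$ with the position-$(\ell_i{+}1)$ entry equal to $p_i$.

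The heart of the construction is a swap: since $v_I$ "sees" a column $i_s$ strictly between $p_i$ and $q_i$ (or below $p_i$) in row $\ell_i$ together with a column $i_{s+d}>q_i$ in row $\ell_i{+}1$, while $v_J$ "sees" $q_i$ in row $\ell_i$ and $p_i$ in row $\ell_i{+}1$, I would define $I'$ and $J'$ by exchanging the relevant column indices between $I$ and $J$ so as to neutralise $f^i$. Concretely, one expects $I'$ to be obtained from $I$ by replacing $i_{s+d}$ (the column giving the $+1$ in row $\ell_i{+}1$) with $p_i$, and $J'$ obtained from $J$ by the reverse replacement, together with whatever single further exchange is forced to keep both sets $k$-subsets of $[n]$ of the right shape; the key numerical identity $v_{I'}+v_{J'}=v_I+v_J$ then reduces to checking that the multiset of (row, column) pairs used is preserved, which in turn follows from $f^i$ being supported only in rows $\ell_i,\ell_i{+}1$ with the $\pm1$ pattern displayed in Definition~\ref{fflv gr tropical map}. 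After the swap I must re-run the rule of Lemma~\ref{lemma gr fflv vertices} on $I'$ and $J'$ to identify their tuples in $B_i$ and confirm $\langle f^i, v_{I'}\rangle = \langle f^i, v_{J'}\rangle = 0$; this is where the case analysis $j_s = p_i$ vs. $j_s$ elsewhere, and $j_{s+d} = q_i$ vs. $j_{s+d} > q_i$, from Lemma~\ref{lemma gr fflv inner products} gets used to rule out the values $\pm1$ for the new sets.

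I expect the main obstacle to be bookkeeping: ensuring that after the exchange the sets $I'$, $J'$ are genuinely $k$-element subsets of $[n]$ (no repeated columns, nothing out of range) in \emph{all} of the subcases distinguished by where $p_i$, $q_i$, the position-$\ell_i$ entry, and the first entry $\geq k+1$ fall relative to one another — in particular the boundary cases $q_i = n$ and $\ell_i = p_i - 1$ coming from the recursion defining $S(k,n)$. A clean way to organise this is to note that the $\pm 1$ contributions to $\langle f^i,\cdot\rangle$ are "locally" exchangeable: $f^i$ distinguishes, in each vertex, at most the pair of columns $\{p_i, q_i\}$ in rows $\ell_i,\ell_i{+}1$ together with the block of columns $\{q_i{+}1,\dots,n\}$, so the needed $I',J'$ differ from $I,J$ only within this window and the verification localises to a finite check. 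I would present the construction by first giving $I'$ and $J'$ explicitly in the two cases of the $+1$-condition ($p_i < i_s < q_i$ and $i_s < p_i$), then verifying the vector identity by direct comparison of supports, and finally invoking Lemma~\ref{lemma gr fflv inner products} to read off that both new inner products vanish.
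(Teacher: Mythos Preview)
Your high-level plan is right: construct $I',J'$ explicitly, verify $v_{I'}+v_{J'}=v_I+v_J$ row by row, and check the inner products vanish via Lemma~\ref{lemma gr fflv inner products}. But the specific exchange you propose---replace $i_{s+d}$ in $I$ by $p_i$ and do the reverse in $J$---is not the construction that works, and your justification for the vertex identity is not valid. The fact that $f^i$ is supported only in rows $\ell_i,\ell_i{+}1$ constrains the \emph{inner product}, not the multiset of $(\text{row},\text{column})$ pairs; after swapping an element of the underlying set you must recompute the entire tuple via Lemma~\ref{lemma gr fflv vertices}, and there is no reason positions outside rows $\ell_i,\ell_i{+}1$ stay fixed. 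Worse, the swap can fail outright: in $\Gr(4,8)$ at the step $S_i=(4,2,5)$, take $I=\{1,2,6,7\}$ and $J=\{3,4,5,6\}$ (tuples $(1,2,6,7)$ and $(3,5,4,6)$ in $B_i$, with inner products $+1$ and $-1$). Your rule gives $J'=(J\setminus\{4\})\cup\{6\}=\{3,5,6\}$, a $3$-element set. You flag this with ``whatever single further exchange is forced'', but that is exactly the missing idea.

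The paper's move is different and cleaner: instead of swapping elements of the \emph{sets}, it splits the \emph{tuples} at position $\ell_i$, setting
\[
I'=\{T(I)_1,\dots,T(I)_{\ell_i},\,T(J)_{\ell_i+1},\dots,T(J)_k\},\qquad
J'=\{T(J)_1,\dots,T(J)_{\ell_i},\,T(I)_{\ell_i+1},\dots,T(I)_k\}.
\]
This makes the row-by-row multiset identity automatic \emph{provided} the displayed sequences are genuine tuples of $B_i$ for the sets $I',J'$; the real work is then to check distinctness of the entries and that Lemma~\ref{lemma gr fflv vertices} reproduces exactly these orders (after reducing to $I,J$ disjoint from $\{p_i{+}1,\dots,k\}$, the entries on each side of the cut are already sorted and one only compares $T(I)_{\ell_i}$ with $T(J)_{\ell_i+1}$ and vice versa). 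In the example above this yields $I'=\{1,2,4,6\}$, $J'=\{3,5,6,7\}$, both honest $4$-subsets with inner product $0$. Once you reorganise around this tuple-split rather than a set-level element swap, the bookkeeping you anticipated becomes short.
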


\begin{proof}
We first focus on the polytope $P_i$. Since $i$ is fixed and we will not consider the polytope $P_{i-1}$ until we focus on the other case, we will omit $i$ from the notation of the polytope $P_i$, vector $f^i$, and natural number $p_i, \ell_i$ and $q_i$ to avoid confusion with certain elements of subsets. We will write $T(I)$ for the tuple $T(i)_I$ associated to the set $I$. We write $I = \{i_1 < \dots < i_k \}$ and $J = \{j_1 < \dots < j_k\}$ for the two $k$-subsets of $[n]$.

Since $\langle f, v_J\rangle = -1$, by Lemma~\ref{lemma gr fflv inner products}, we have that $T(J)_\ell = q$, that is the entry of $T(J)$ in position $\ell$ is $q$ and $T(J)_{\ell+1} = p$. Suppose that $T(I)_\ell = i_s$ for some $s$. By Lemma~\ref{lemma gr fflv vertices}, we have that $T(I)_{\ell+1} = i_{s+d}$ where $d \ge 1$ is the smallest value such that $i_{s+d} \ge k+1$. Since $\langle f, v_I \rangle = 1$, by Lemma~\ref{lemma gr fflv inner products}, we have that $i_{s+d} > q$ and either: $i_s < p$; or $p < i_s < q$. We define the sets
\begin{eqnarray}\label{eq:def:I'}
I' = \{T(I)_1, \dots, T(I)_\ell, T(J)_{\ell+1}, \dots, T(J)_k\} 
\text{ and }
J' = \{T(J)_1, \dots, T(J)_\ell, T(I)_{\ell+1}, \dots, T(I)_k\}.
\end{eqnarray}
We will show that the entries above within each set are distinct and that the order of the entries in $T(I')$ and $T(J')$ coincide with the order above.

Note that for any $i \in I \cap \{p+1, \dots, k\} $ we have that $T(I)_i = i$ and similarly for $J$. Since $p < \ell$, we have that none of $T(I)_1, \dots, T(I)_\ell$ or $T(J)_1, \dots, T(J)_\ell$ lie in $\{p+1, \dots, k\}$. So, without loss of generality, we may assume that $I \cap \{p+1, \dots, k\} = \emptyset$ and $J \cap \{p+1, \dots, k \} = \emptyset$. Since $p \notin I$ we have that $T(I)$ is in ascending order and all entries of $T(J)$ except $p$ are in ascending order.

Consider the set $J'$. Since $T(J)_\ell = q < i_{s+d} = T(I)_{\ell+1}$ it follows that the entries of $J'$ are distinct. Moreover the entries of tuple $T(J')$ are exactly in the order shown in the definition~of~$J'$.

Next consider the set $I'$. If $i_s < p$ then it follows that $T(I)_\ell = i_s < p = T(J)_{\ell+1} < T(J)_{\ell+2}$ and so all the entries in $I'$ are distinct and $T(I')$ is in the order given in \eqref{eq:def:I'} in the definition of $I'$. On the other hand, if $p < i_s < q$ then note that $T(I)_{\ell} = i_s < q = T(J)_\ell < T(J)_{\ell+2}$. Since $p$ does not lie in $I$, it follows that all entries of $I'$ are distinct. Moreover, $i_s < q$, so by Lemma~\ref{lemma gr fflv vertices}, it follows that the entries of $T(I)$ are ordered as in the definition of $I'$. As a result we have shown that $v_I + v_J = v_{I'} + v_{J'}$. Since $T(J')_{\ell} = q$ and $T(J')_{\ell} > q$, we see that $\langle f, v_{J'}\rangle = 0$ and so it follows that $\langle f, v_{J'} \rangle = 0$ and we are done for the polytope $P_i$.

\medskip

We now consider the polytope $P_{i-1}$. For this case, we will omit the $i-1$ from the polytope $P_{i-1}$ and write $T(I)$ for the tuple $T(i-1)_I$. Note that, as in the previous case, we write $f$ for the vector $f^i$ and omit the $i$ from $p_i$, $\ell_i$ and $q_i$.

Since $\langle f, v_J \rangle = -1$, we have that $T(J)_{\ell} = p$ and $T(J)_{\ell+1} = q$. Suppose that $T(I)_\ell = i_s$ for some $s$. By Lemma~\ref{lemma gr fflv vertices}, we have that $T(I)_{\ell+1} = i_{s+d}$ where $d \ge 1$ is the smallest value such that $i_{s+d} \ge k+1$. Since $\langle f, v_I \rangle = 1$, by Lemma~\ref{lemma gr fflv inner products}, we have that $i_{s+d} > q$ and either: $i_s < p$; or $p < i_s < q$. We define the sets
\begin{eqnarray}\label{eq:def:I'fori-1}
I' = \{T(I)_1, \dots, T(I)_\ell, T(J)_{\ell+1}, \dots, T(J)_k\} 
\text{ and }
J' = \{T(J)_1, \dots, T(J)_\ell, T(I)_{\ell+1}, \dots, T(I)_k\}.
\end{eqnarray}
We will now show that the entries above within each set are distinct and that the order of the entries in $T(I')$ and $T(J')$ coincide with the order above.

Note that for any $i \in I \cap \{p+1, \dots, k\} $ we have that $T(I)_i = i$ and similarly for $J$. Since $p < \ell$, we have that none of $T(I)_1, \dots, T(I)_\ell$ or $T(J)_1, \dots, T(J)_\ell$ lie in $\{p+1, \dots, k\}$. So, without loss of generality, we may assume that $I \cap \{p+1, \dots, k\} = \emptyset$ and $J \cap \{p+1, \dots, k \} = \emptyset$. So we have that the entries of $T(I)$ and $T(J)$ are in ascending order.

For the set $J'$, since $T(J)_\ell = p < q < i_{s+d} = T(I)_{\ell+1}$, it follows that the entries of $J'$ are distinct and moreover the entries of tuple $T(J')$ are exactly in the order shown above in \eqref{eq:def:I'fori-1} in the definition of $J'$. Similarly for the set $I'$, we have that $T(I)_\ell = i_s < q = T(J)_{\ell+1}$ and so the entries of $I'$ above are distinct and the tuple $T(I')$ is identical to the order shown above. So, we have shown that $v_I + v_J = v_{I'} + v_{J'}$. Since $T(I')_{\ell} = i_s \neq p$ and $i_s < q$, and $T(I')_{\ell} > q$, we see that $\langle f, v_{I'}\rangle = 0$ and so it follows that $\langle f, v_{J'} \rangle = 0$. Therefore we are done for the polytope $P_{i-1}$ and the proof is complete.
\end{proof}

We are now ready to prove our main result.

\begin{proof}[\bf Proof of Theorem~\ref{thm:main_theorem}]
By Lemma~\ref{lemma gr fflv inner products}, we have that $\varphi^i(V(P_{i-1})) = V(P_i)$. The second part of Lemma~\ref{lemma gr fflv no bad edges}, implies that $\varphi^i(P_{i-1}) \subseteq P_i$. So see this, take any point $\varphi^i(x) \in \varphi^i(P_{i-1})$. We have $x = \sum_{v \in V(P_{i-1})} \alpha_v v$ for some $\alpha_v \in \RR_{\ge 0}$ such that $\sum_{v} \alpha_v = 1$. By the second part of Lemma~\ref{lemma gr fflv no bad edges}, we can rewrite this expression for $x$ so that it is supported on vertices which have non-negative or non-positive inner product with $f^i$. Since $\varphi^i$ acts linearly on each half-space defined by $(f^i)^\perp$, it follows that $\varphi^i(x)$ lies in $P_i$.
The first part of Lemma~\ref{lemma gr fflv no bad edges} implies that $\varphi^i(P_{i-1}) \supseteq P_{i}$. To see this, apply the above argument to the tropical map $(\varphi^i)^{-1} = \varphi_{-w, F}$ where $F = \conv\{0, f \}$. This completes the proof. 
\end{proof}

\bigskip

\bibliographystyle{abbrv}
\bibliography{References.bib}

\bigskip
\noindent
\footnotesize {\bf Authors' addresses:}

\medskip 

\noindent Department of Pure and Applied Mathematics, 
Osaka University, Suita, Osaka 565-0871, Japan\\
E-mail address: {\tt oliver.clarke.crgs@gmail.com}

\medskip\noindent
Department of Pure and Applied Mathematics,
Osaka University, Suita, Osaka 565-0871, Japan\\
E-mail address:  {\tt higashitani@ist.osaka-u.ac.jp}

\medskip\noindent
Department of Mathematics and Department of Computer Science, KU Leuven, Belgium 
\\
UiT – The Arctic University of Norway, 9037 Troms\o, Norway
\\ E-mail address: {\tt fatemeh.mohammadi@kuleuven.be}

\end{document}